\documentclass[12pt]{amsart}

\usepackage{amsmath,amssymb,amsthm}
\usepackage{graphicx}
\usepackage{enumerate}

\usepackage[paperwidth=8.5in,paperheight=11in,
left=1.125in, right=1.125in,top=1.25in, bottom=1.25in]{geometry}


\newtheorem{theorem}{Theorem}[section]
\theoremstyle{plain}

\newtheorem{corollary}[theorem]{Corollary}

\newtheorem{definition}[theorem]{Definition}
\newtheorem{example}[theorem]{Example}

\newtheorem{lemma}[theorem]{Lemma}

\newtheorem{proposition}[theorem]{Proposition}

\numberwithin{equation}{section}

\DeclareMathOperator{\aut}{Aut}
\DeclareMathOperator{\rk}{rank}

\newcommand{\abar}{\bar{\alpha}}
\newcommand{\bbar}{\bar{\beta}}
\newcommand{\cbar}{\bar{\gamma}}

\newcommand{\partialb}{\bar{\partial}_{\flat}}

\newcommand{\Mob}{\text{\rm M\"{o}b}}

\begin{document}
\title[The Schwarzian derivative and M\"{o}bius equation]{The Schwarzian derivative and M\"{o}bius equation on strictly pseudo-convex CR manifolds}
\author{Duong Ngoc Son}
\address{Texas A\&M University at Qatar, Science Program, PO Box 23874, Education City, Doha, Qatar}
\email{son.duong@qatar.tamu.edu}
\date{19 June 2016}
\thanks{2000 {\em Mathematics Subject Classification}. 32V05, 30F45}
\thanks{\emph{Key words and phrases:} CR map, Schwarzian derivative, M\"{o}bius transformation}
\thanks{The author was partially supported by the Qatar National Research Fund, NPRP project 7-511-1-098.}
\begin{abstract}
The notion of Schwarzian derivative for locally univalent holomorphic functions on complex plane was
generalized for conformal diffeomorphisms by Osgood and Stowe in 1992 \cite{OS}. We shall identify a tensor that may serve as an analogue of the Schwarzian of Osgood and Stowe for CR mappings, and then use the tensor to define and study the CR M\"{o}bius transformations and metrics of pseudo-hermitian manifolds. We shall establish basic properties of the CR Schwarzian and a local characterization of CR spherical manifolds in terms of fully integrability of the CR M\"{o}bius equation. In another direction, we shall prove two rigidity results for M\"{o}bius change of metrics  on compact CR manifolds.
\end{abstract}
\maketitle
\section{Introduction}
In \cite{OS},
Osgood and Stowe introduce the $(0,2)$ symmetric traceless  tensor associated with a $C^2$ function on a Riemannian manifold as follows. 
Let $(M^m,g)$ be a Riemannian manifold of dimension~$m$. If $\varphi \colon M^m \to \mathbb{R}$ is a $C^{2}$ function, then the \emph{Schwarzian tensor} of $\varphi$ is defined to be
\begin{equation}
 B_g(\varphi) = \mathrm{Hess}\, (\varphi) - d\varphi \otimes d\varphi -\tfrac{1}{m}\{\Delta \varphi - \|\mathrm{grad}\, \varphi\|^2\}g,
\end{equation}
where $ \mathrm{Hess}\, (\varphi)$ is the Hessian computed with respect to Levi-Civita connection and $\Delta$ is the Laplacian.
For a conformal
local diffeomorphism $f\colon (M,g) \to (M',g')$ with $\hat{g} = e^{2\varphi}g = f^{\ast}g'$, the \emph{Schwarzian derivative}
of $f$ is defined to be
\begin{equation}
 \mathcal{S}_g(f) = B_g(\varphi).
\end{equation}
This tensor is a generalization to higher dimension of the Schwarzian derivative of a conformal mapping in the
complex plane. Its basic properties include:
\begin{enumerate}[(i)]
 \item Suppose that $h\colon (M,g)\to (M', g')$ and $f\colon (M', g')\to (M'',g'')$, the ``chain rule'' holds:
 \begin{equation}
 \mathcal{S}_g(f\circ h) = \mathcal{S}_{g}(h) + h^{\ast} \mathcal{S}_{g'}(f);
 \end{equation}
 \item $\mathcal{S}_g(f) = 0$ if and only if $f$ preserves the traceless component of Ricci tensor. This means if $R_{ij}$ and $\hat{R}_{ij}$ is the Ricci curvatures of $g$ and $\hat{g} := h^{\ast}g'$, respectively, then 
 \begin{equation}\label{e:tracelessricci}
  \hat{R}_{ij} - R_{ij}\ \text{is a functional multiple of the metric}\ g_{ij}.
 \end{equation} 
\end{enumerate}
As explained in \cite{OS}, when $M=M' = \mathbb{C}$ is the complex plane with Euclidean metric and $f$ is a locally univalent holomorphic mapping, then the Schwarzian derivative tensor $\mathcal{S}_{\mathrm{euc}}(f)$ can
be represented, in the standard coordinates, by a $2\times 2$ matrix
\begin{equation} 
 \mathcal{S}_{\mathrm{euc}}(f)
 =
B(\log |f'|)
=
 \begin{bmatrix}
  \Re S(f) & -\Im S(f)\\
  -\Im S(f) & -\Re S(f)
 \end{bmatrix},
\end{equation} 
where $S(f)$ is the ``classical'' Schwarzian derivative:
\begin{equation} 
 S(f) = \frac{f'''}{f'} - \frac{3}{2}\left(\frac{f''}{f'} \right)^2.
\end{equation} 
Therefore, $\mathcal{S}_{\mathrm{euc}}(f)=0$ if and only if $S(f) =0$. Hence,  the vanishing of $\mathcal{S}_{\mathrm{euc}}(f)$ also characterizes the M\"{o}bius
transformations of complex plane, i.e., those of the form
\begin{equation} 
 f(z) = \frac{az +b}{cz+d}, \quad ad - bc \ne 0.
\end{equation} 
where $a,b,c$ and $d$ are constants. Diffeomorphisms between Riemannian
manifolds satisfying $\mathcal{S}_g(f) =0 $ are also termed M\"{o}bius transformations in \cite{OS}; these transformations of a manifold into itself form a subgroup of the conformal group. Likewise, a  conformal metric $\hat{g} = e^{2\varphi}g$ satisfying $B_{g}(\varphi)=0$ is called a \emph{M\"{o}bius metric} (with respect to $g$). We remark that a conformal change to a M\"{o}bius metric preserves the traceless Ricci tensor as in~\eqref{e:tracelessricci}. It has been studied by many authors (see, e.g., \cite{BR,KR1,KR2} and references therein).

The first purpose of this paper is to introduce and study a notion of Schwarzian derivative for CR mappings between pseudo-hermitian manifolds.
Let $(M, T^{1,0}M,\theta)$ be a strictly pseudo-convex pseudo-hermitian manifold of CR dimension $n$ in the sense of Webster \cite{W}. For a $C^2$ smooth function 
$\varphi \colon M \to \mathbb{R}$, we define the \emph{CR Schwarzian tensor} 
$B_{\theta}(\varphi)$ to be
\begin{multline}\label{e:defsch}
B_{\theta}(\varphi)(Z,W)
 =
\nabla^2 \varphi(Z,W) + \nabla^2 \varphi(W,Z) \\
- 4(\partial_{\flat}\varphi \otimes \partial_{\flat} \varphi + \partialb \varphi \otimes \partialb \varphi)(Z,W) - \tfrac{1}{n}\left(\Delta_{\flat} \varphi\right) G_{\theta}(Z,W).
\end{multline}
where $\Delta_{\flat} \varphi$ is the sub-Laplacian, $G_{\theta}$ is the complexified Levi metric, $Z,W \in T^{1,0}M \oplus T^{0,1}M$. That is, for real vector $X,Y$ in $\mathcal{H}: = \Re (T^{1,0}M $), 
\begin{equation}
G_{\theta}(X,Y) = (d\theta)(X,JY).
\end{equation}
The linear connection $\nabla$ is the Tanaka-Webster connection, which always has torsion, and thus $\nabla^2 \varphi(Z,W)$ is not necessary equal $ \nabla^2\varphi (W,Z)$.

Suppose that $f \colon (M,\theta) \to (M', \theta')$ is a CR (local) diffeomorphism such that 
\begin{equation}
 f^{\ast} \theta' = e^{2\varphi} \theta.  
\end{equation}
In analogy with Osgood and Stowe \cite{OS}, we define the \emph{CR Schwarzian derivative} of $f$ to be
\begin{equation} 
 \mathcal{S}(f) = \mathcal{S}_{\theta}(f) = B_{\theta}(\varphi).
\end{equation} 
We shall say that a CR
diffeomorphism $f \colon (M,\theta) \to (M',\theta')$  is a \emph{M\"{o}bius
transformation} if either (i) $n\geq 2$ and  $\mathcal{S}_{\theta}(f) = 0$; or (ii) 
$n=1$, $\mathcal{S}_{\theta}(f) = 0$ and $\varphi$ is CR-pluriharmonic (the reason why in the case $n=1$ we require $\varphi$ to be CR-pluriharmonic will be clear). Here a function $\psi$ is \emph{CR-pluriharmonic} if locally $\psi$ is a real part of a CR function.  We say that a pseudo-hermitian
structure $\hat{\theta} = e^{2\varphi}$ is a \emph{M\"{o}bius pseudo-hermitian structure with respect to} $\theta$ (or M\"{o}bius metric) if either (i) $n\geq 2$ and $B_{\theta}(\varphi)=0$, or (ii) $n=1$, $B_{\theta}(\varphi) = 0$, and $\varphi$ is CR-pluriharmonic. In any dimension, $\hat{\theta}$ is M\"{o}bius with respect to $\theta$ if and only if the identity mapping $\mathrm{id} \colon (M,\theta)\to (M, \hat{\theta})$ is a M\"{o}bius  transformation. 

In Section~3, we shall establish basic properties of CR Schwarzian derivative which are much similar to those of the Riemannian counterpart. In particular, we shall establish the ``chain rule'', and the relation between the vanishing of the CR Schwarzian and the pseudo-conformal changes of metrics preserving torsion and traceless Webster Ricci tensor.

The second purpose of this paper is to study the \emph{M\"{o}bius equation}, i.e., the equation $B_{\theta}(\varphi)=0$, whose solutions give rises to M\"{o}bius metrics with respect to~$\theta$. For this purpose, we first study the equation on the Heisenberg manifold with its standard pseudo-hermitian structure $\Theta$ (see Section~4). In this situation, the equation $B_{\Theta}(\varphi) =0$ was essentially solved by Jerison and Lee in their study of Yamabe problem on CR manifolds \cite{JLCRYamabe, JLextremal}. In fact, since $\Theta$ is pseudo-Einstein and the pseudo-hermitian torsion vanishes, a M\"{o}bius metric~$\theta$ on the Heisenberg manifold must have constant Webster scalar curvature (see Theorem~\ref{thm:mobric}); all such $\theta$ were found in \cite{JLCRYamabe} and \cite{JLextremal} by solving the M\"{o}bius equation. It turns out that, on $(\mathbb{H}^m,\Theta)$, the equation is \emph{fully integrable}, in the sense that it admits solutions with arbitrary prescribed horizontal gradient at a given point (see Definition~\ref{def:fi}).

Our next result thus obtained is a complete characterization of the pseudo-hermitian manifolds for which the M\"{o}bius equation is fully integrable (Theorem~\ref{thm:cstcur}). As a corollary, we obtain the following characterization of \emph{locally} CR spherical manifolds.
\begin{theorem}\label{thm:intro1}
Let $M$ be a strictly pseudo-convex CR manifold of dimension at least 5 and $p\in M$. Then $M$ is CR spherical near~$p$ if and only if the CR M\"{o}bius equation $B_{\theta}(\varphi) = 0$ is fully integrable at $p$ for some pseudo-hermitian structure $\theta$ defined near~$p$.
\end{theorem}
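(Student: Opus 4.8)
The plan is to derive this as a corollary of Theorem~\ref{thm:cstcur} (the complete characterization of when the M\"{o}bius equation is fully integrable) together with the Heisenberg computation of Section~4 and the chain rule of Section~3, the hypothesis $\dim M\ge 5$ entering only where the relevant pseudo-conformal curvature obstruction is identified with the Chern--Moser tensor. I would treat the two implications separately.

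\emph{CR spherical near $p$ $\Rightarrow$ fully integrable at $p$ for some $\theta$.} Choose a CR diffeomorphism $\Phi\colon U\to V\subset\mathbb{H}^{n}$ of a neighborhood $U$ of $p$ onto an open subset of the Heisenberg manifold, and set $\theta:=\Phi^{\ast}\Theta$ on $U$. Then $\Phi^{\ast}\Theta=\theta$, i.e.\ $\Phi^{\ast}\Theta=e^{2\varphi}\theta$ with $\varphi\equiv 0$, so $\mathcal{S}_{\theta}(\Phi)=B_{\theta}(0)=0$ and $\Phi\colon(U,\theta)\to(V,\Theta)$ is (trivially) a M\"{o}bius transformation. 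Applying the chain rule to $\Phi$ followed by $\mathrm{id}\colon(V,\Theta)\to(V,e^{2\psi}\Theta)$ yields $B_{\theta}(\psi\circ\Phi)=\Phi^{\ast}B_{\Theta}(\psi)$ for every $C^{2}$ function $\psi$ on $V$; hence $\psi$ solves the M\"{o}bius equation on $(V,\Theta)$ exactly when $\psi\circ\Phi$ solves it on $(U,\theta)$. Since $\Phi$ carries the contact distribution isomorphically, a function $\psi$ with arbitrarily prescribed horizontal gradient at $\Phi(p)$ produces $\psi\circ\Phi$ with arbitrarily prescribed horizontal gradient at $p$. By Section~4 (the Jerison--Lee solutions) the M\"{o}bius equation is fully integrable on $(\mathbb{H}^{n},\Theta)$; using a Heisenberg translation---a CR automorphism preserving $\Theta$---this holds at $\Phi(p)$ as well, and therefore $B_{\theta}(\varphi)=0$ is fully integrable at $p$.

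\emph{Fully integrable at $p$ for some $\theta$ $\Rightarrow$ CR spherical near $p$.} Suppose $B_{\theta}(\varphi)=0$ is fully integrable at $p$ for a pseudo-hermitian structure $\theta$ defined near $p$. By Theorem~\ref{thm:cstcur} this forces $\theta$ to satisfy the conditions characterizing full integrability, and in particular forces the vanishing near $p$ of the pseudo-conformally invariant curvature obstructing local flatness of the underlying CR structure. When $\dim M\ge 5$ this obstruction is (equivalent to) the Chern--Moser tensor, whose vanishing on a neighborhood of $p$ is equivalent, by Chern--Moser/Tanaka theory, to $M$ being CR spherical near $p$. This is exactly where the dimension hypothesis enters: in dimension $3$ the Chern--Moser tensor vanishes identically and one would instead have to control the Cartan tensor, which is not produced by the same computation.

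The substance of the argument is thus Theorem~\ref{thm:cstcur}, which is the main obstacle: one must show that the mere existence of M\"{o}bius metrics $\hat\theta=e^{2\varphi}\theta$ with arbitrarily prescribed horizontal gradient of $\varphi$ at $p$ is rigid enough to annihilate every component of the Chern--Moser tensor. Concretely I would differentiate $B_{\theta}(\varphi)=0$ along such a family of solutions, invoke the relation (Section~3) between the vanishing of the CR Schwarzian and pseudo-conformal changes preserving the pseudo-hermitian torsion and the traceless Webster--Ricci tensor, and track how the curvature transforms under these changes; the freedom in the prescribed horizontal gradient then pins down the remaining curvature components, with the Heisenberg case of Section~4 serving as the model showing the estimate is sharp. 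One minor point requiring care is that full integrability is assumed only for \emph{some} $\theta$ near $p$ rather than a canonical one, but this causes no trouble: in the first implication $\theta$ is produced explicitly, and in the second Theorem~\ref{thm:cstcur} applies to an arbitrary $\theta$.
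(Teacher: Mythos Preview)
Your reduction of Theorem~\ref{thm:intro1} to Theorem~\ref{thm:cstcur} is correct and is exactly what the paper does: the three pseudo-hermitian space forms $Q_0,\,Q_+(c),\,Q_-(c)$ are all CR spherical, so Theorem~\ref{thm:cstcur} immediately gives both implications. Your argument for ``CR spherical $\Rightarrow$ fully integrable'' is in fact a touch simpler than the paper's, because the statement only asks for \emph{some} $\theta$: you may take $\theta=\Phi^{\ast}\Theta$ and pull back solutions directly, whereas the sufficiency half of the paper's Theorem~\ref{thm:cstcur} must handle an arbitrary space-form structure and therefore invokes the additive rule (Proposition~\ref{lem:additive}) with a nontrivial~$\varphi$.

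Where your sketch diverges from the paper is in the proof of Theorem~\ref{thm:cstcur} itself, which you correctly flag as the real content. Your plan---differentiate $B_\theta(\varphi)=0$ within a family, use the Section~3 interpretation via preservation of torsion and traceless Ricci, and track curvature under the change of contact form---is plausible but not how the paper proceeds. The paper instead \emph{linearizes} immediately: writing $u=e^{-\varphi}\cos\psi$ (with $\varphi+i\psi$ CR) converts the system to $u_{\alpha\beta}=0$, $u_{\alpha\bbar}=v\,h_{\alpha\bbar}$, and then works entirely with commutation relations for~$u$. Two ingredients you do not mention are essential there: Lemma~\ref{lem:localtor}, which uses the third-order commutation $\varphi_{;\alpha\beta\gamma}=\varphi_{;\alpha\gamma\beta}$ to force $A_{\alpha\beta}\varphi_\gamma=A_{\alpha\gamma}\varphi_\beta$ and hence $A_{\alpha\beta}=0$ once two solutions with independent gradients exist; and the elementary linear-algebra Lemma~\ref{lem:la}, needed to pass from $if_0\,\delta_{\alpha\beta}=\bar v_{\bbar}u_\alpha-v_\alpha u_{\bbar}$ to $v_\alpha=\eta_u u_\alpha$. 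Only after these steps does the full integrability hypothesis get used pointwise (choosing $u_\rho=\delta_{\rho\sigma}$) to read off $R_{\beta\cbar\alpha\bar\sigma}=\eta(\delta_{\alpha\gamma}\delta_{\beta\sigma}+\delta_{\alpha\sigma}\delta_{\beta\gamma})$ and hence the vanishing of the Chern--Moser tensor. Your high-level description is compatible with this, but the route through the transformation laws of Section~3 would be more circuitous; the linearization is what makes the computation clean.
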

In Example~\ref{ex:2} below we shall construct a family of \emph{nonspherical} manifolds for which the M\"{o}bius equation admits nonconstant local solutions. The example thus shows that the fully integrability assumption in the theorem above, although rather strong, is essential; it also shows that the corresponding statement in three-dimension does \emph{not} hold.

In Riemannian geometry, a similar result was also obtained by Osgood and Stowe in \cite[Theorem~2.3]{OS}. The result says that the (Riemannian) M\"{o}bius equation is fully integrable at a point if and only if the manifold is of constant curvature near that point. Despite the similarity in the statements, the proof of Theorem~\ref{thm:intro1} shares little similarity with its Riemannian counterpart.

We now turn to the rigidity problem for the M\"{o}bius equation on \emph{compact} pseudo-hermitian manifolds. We shall give two conditions for $(M,\theta)$ so that the solutions to the equation are the constants. Namely, in Section~5, we shall prove the following theorem. (Observe that the required condition only involves the CR structure of~$M$).
\begin{theorem}\label{thm:compact1}
Let $(M,\theta)$ be a compact pseudo-hermitian manifold. If for any $p\in M$, the stability group $\mathrm{Aut}(M,p)$ has zero dimension, then the CR-pluriharmonic (global) solutions to the M\"{o}bius equation $B_{\theta}(\varphi) =0$ are the constants.
\end{theorem}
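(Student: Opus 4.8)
The plan is to argue by contraposition: assuming $\varphi$ is a non-constant CR-pluriharmonic function on the compact manifold $M$ with $B_\theta(\varphi)=0$, I will manufacture a non-trivial infinitesimal CR automorphism of $M$ that vanishes at a point, which is incompatible with every stability group $\mathrm{Aut}(M,p)$ being zero-dimensional. The first step is to unwind the M\"obius equation in an admissible coframe. Pairing \eqref{e:defsch} with $(Z_\alpha,Z_\beta)$ and using that the Tanaka--Webster torsion has no $(2,0)$ part (so $\varphi_{,\alpha\beta}$ is symmetric) gives $\varphi_{,\alpha\beta}=2\varphi_{,\alpha}\varphi_{,\beta}$; pairing with $(Z_\alpha,Z_{\bar\beta})$ gives that $\varphi_{,\alpha\bar\beta}$ is a multiple of $h_{\alpha\bar\beta}$. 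The clean way to record the first relation is to set $u:=e^{-2\varphi}$: since $\nabla h=0$, one computes $u_{,\alpha\beta}=-2u(\varphi_{,\alpha\beta}-2\varphi_{,\alpha}\varphi_{,\beta})=0$, i.e. $\nabla^{1,0}\nabla^{1,0}u\equiv 0$ on $M$.

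The crucial step is to associate to $u$ a non-trivial infinitesimal CR automorphism $X$ of $M$ that vanishes at every critical point of $\varphi$. The natural candidate is the horizontal lift of the gradient $\partial_{\flat}u$, suitably completed along the Reeb direction: the identity $\nabla^{1,0}\nabla^{1,0}u\equiv 0$, together with the reality of $u$, controls the anti-holomorphic derivatives of this field, while the $(1,1)$-component of $B_\theta(\varphi)=0$ and the CR-pluriharmonicity of $\varphi$ are what force the remaining mixed and Reeb-direction components into the overdetermined first-order form characterizing an infinitesimal CR transformation of $(M,T^{1,0}M)$. This is precisely the point at which CR-pluriharmonicity is indispensable—and why in CR dimension one it cannot be dropped, which is the phenomenon flagged in the introduction. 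I expect the main technical obstacle to be exactly here: choosing the Reeb-direction component so that $X$ is simultaneously genuinely CR and vanishes wherever $d\varphi$ does (note the naive contact field with Hamiltonian $u$ does not vanish at the extrema of $\varphi$, so some care is needed), all while keeping track of the torsion $A_{\alpha\beta}$ and the Webster curvature. I would organize this verification around the Section~3 description of $\mathcal{S}_\theta=0$ as preservation of torsion and traceless Webster--Ricci.

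Finally I would conclude by compactness. Since $M$ is compact and $\varphi$ is non-constant, $\varphi$ attains a maximum at some point $p$, where $d\varphi_p=0$, hence $X_p=0$. As $M$ is compact, $X$ is complete and its flow $\{\Phi_t\}$ preserves the CR structure, so it is a one-parameter group of CR automorphisms of $M$; since $X_p=0$, each $\Phi_t$ fixes $p$, so $\{\Phi_t\}\subseteq\mathrm{Aut}(M,p)$. Moreover $X\not\equiv 0$, because its horizontal part recovers the horizontal gradient of $u=e^{-2\varphi}$, which vanishes identically only when $\varphi$ is constant; hence $\{\Phi_t\}$ is a non-trivial connected subgroup and $\dim\mathrm{Aut}(M,p)\ge 1$, contradicting the hypothesis. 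Therefore $\varphi$ is constant, as claimed.
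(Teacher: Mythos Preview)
Your overall strategy is exactly the paper's: assume a nonconstant CR-pluriharmonic solution, build a nontrivial infinitesimal CR automorphism $X$ that vanishes at a critical point of $\varphi$, and conclude that some $\mathrm{Aut}(M,p)$ has positive dimension. The gap is in the ``crucial step'' you yourself flag: your candidate $X$ is not the right object, and the correct one is not easy to guess. Two concrete issues. First, your linearization $u=e^{-2\varphi}$ does satisfy $u_{,\alpha\beta}=0$, but it is \emph{not} CR-pluriharmonic for $n\geq 2$: one has $u_{,\alpha\bar\beta}=4u\,\varphi_{,\alpha}\varphi_{,\bar\beta}-2u\,\varphi_{,\alpha\bar\beta}$, and the first term spoils the trace condition. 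The paper instead uses the (locally defined) $u=e^{-\varphi}\cos\psi=\Re(e^{-\varphi-i\psi})$, which is CR-pluriharmonic by construction and satisfies $u_{,\alpha\bar\beta}=v\,h_{\alpha\bar\beta}$ and $P_\alpha u=0$. Second, and more seriously, any infinitesimal CR automorphism is an infinitesimal contact transformation, hence by Gray's theorem of the form $X=H^\theta_f-fT$ with $f=-\theta(X)$; its horizontal part is $J\nabla_H f$, not $\nabla_H f$. So ``the horizontal gradient of $u$, completed in the Reeb direction'' cannot literally be CR unless you produce a potential $f$ with $J\nabla_H f=\nabla_H u$, and you do not.

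The paper's missing idea is to take the \emph{globally defined} potential $f=e^{-2\varphi}\,|\bar\partial_\flat\varphi|^2$ (which equals $|\bar\partial_\flat u|^2$ for the paper's $u$) and set $X=H^\theta_f-fT$. Then $f\geq 0$ with $\{f=0\}$ exactly the critical set of $\varphi$, so $X$ vanishes precisely there. That $X$ is CR reduces to the single identity $f_{;\alpha\beta}+iA_{\alpha\beta}f=0$, which the paper obtains from $u_{\alpha\beta}=0$ via $f_\alpha=\bar v\,u_\alpha$, $\bar v_\alpha=-iA_{\alpha\beta}u^\beta$ (from $P_\alpha u=0$), and the symmetry $A_{\alpha\beta}u_\gamma=A_{\alpha\gamma}u_\beta$ forced by $u_{;\alpha\beta\gamma}=u_{;\alpha\gamma\beta}$. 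With this $X$ in hand, your final paragraph goes through verbatim.
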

Here the stability group $\mathrm{Aut}(M,p)$ is the group of germs of CR diffeomorphisms of $M$ fixing~$p$. This theorem seems to have no Riemannian counterpart.

Our final result stated in this introduction is the following theorem whose proof will be given in Section~6.
\begin{theorem} \label{thm:compact2}
  Let $(M,\theta)$ be a compact pseudo-hermitian manifold with nonpositive Webster's Ricci curvature.
  Then the CR-pluriharmonic solutions to $B_{\theta}(\varphi) = 0$ are the constants. The group of M\"{o}bius transformations coincides with the group of homotheties.
\end{theorem}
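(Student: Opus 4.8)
The plan is to mimic, on the CR function underlying a CR-pluriharmonic $\varphi$, the role that $e^{-\varphi}$ plays in the Riemannian rigidity proof of Osgood and Stowe. First, the statement about M\"obius transformations follows from the statement about solutions: if $f\colon(M,\theta)\to(M,\theta)$ is a M\"obius transformation then $f^{\ast}\theta=e^{2\varphi}\theta$ with $B_{\theta}(\varphi)=0$, and $\varphi$ is CR-pluriharmonic (by definition when $n=1$, and automatically when $n\ge 2$ by the basic properties established in Section~3). Granting the first assertion, $\varphi$ is constant, so $f^{\ast}\theta=c\,\theta$ for a positive constant $c$, i.e.\ $f$ is a homothety; conversely homotheties are obviously M\"obius. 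Hence it suffices to prove: if $M$ is compact and connected, $\Ric_{\theta}\le 0$, and $\varphi$ is a CR-pluriharmonic solution of $B_{\theta}(\varphi)=0$, then $\varphi$ is constant.

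Next, an algebraic reduction. Splitting $B_{\theta}(\varphi)=0$ into its $(2,0)$- and $(1,1)$-components yields, in an admissible frame, $\varphi_{\alpha\beta}=2\varphi_{\alpha}\varphi_{\beta}$ together with ``$\varphi_{\alpha\bar\beta}$ is a pointwise complex multiple of the Levi form''. The first of these is equivalent to $u_{\alpha\beta}=0$ for $u:=e^{-2\varphi}$. Since $\varphi$ is CR-pluriharmonic we may write $\varphi=\Re F$ locally with $F$ a CR function; then $F_{\alpha}=2\varphi_{\alpha}$, hence $F_{\alpha\beta}=F_{\alpha}F_{\beta}$, and therefore
\[
 G:=e^{-F}\quad\text{is a local CR function with}\quad G_{\alpha\beta}=0,\qquad |G|^{2}=u .
\]
The whole argument then runs on the globally defined function $q:=|\partial_{\flat}G|^{2}=|\nabla_{\flat}u|^{2}/u=4u|\nabla_{\flat}\varphi|^{2}$ (independent of the local choice of $F$), via two identities. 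Using $G_{\alpha\beta}=0$, $G_{\bar\alpha\bar\beta}=0$ and the automatic relation $G_{\alpha\bar\beta}=\pm i\,h_{\alpha\bar\beta}(TG)$ (valid for any CR $G$), a short computation gives
\[
 |\nabla_{\flat}q|^{2}=4\,|TG|^{2}\,q ,
\]
while one more commutation of covariant derivatives followed by a Levi-trace produces a Bochner-type identity
\[
 \Delta_{\flat}q = a_{n}\,\Ric_{\theta}(\partial_{\flat}G,\overline{\partial_{\flat}G}) - 4n\,|TG|^{2},
\]
where $a_{n}$ is a dimensional constant and $\Ric_{\theta}(\partial_{\flat}G,\overline{\partial_{\flat}G})$ --- the Webster Ricci form evaluated, via the Levi metric, on $\partial_{\flat}G$ and its conjugate --- is $\le 0$ because $\Ric_{\theta}\le 0$. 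Working with $G$ rather than directly with $\varphi$ is precisely what makes this clean: the pseudo-hermitian torsion $A_{\alpha\beta}$, which wrecks the naive CR analogue of the Riemannian Reilly identity, can only pair with $h^{\alpha\bar\beta}G_{\bar\beta}=0$ and so drops out, and the Reeb cross-terms collapse because $Tq=0$.

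Now integrate the Bochner identity over the compact manifold and combine it with $|\nabla_{\flat}q|^{2}=4|TG|^{2}q$ --- if needed through a weight such as $(q+\varepsilon)^{-n}$ with $\varepsilon\downarrow 0$, the set $\{q=0\}$ (the critical set of $\varphi$, where $TG$ vanishes together with $\partial_{\flat}G$) contributing nothing in the limit. One is forced to conclude $\Ric_{\theta}(\partial_{\flat}G,\overline{\partial_{\flat}G})\equiv 0$ and $TG\equiv 0$. But $TG\equiv 0$ forces $G_{\alpha\bar\beta}=0$ and $\nabla_{0}G_{\alpha}=\nabla_{\alpha}(TG)=0$, so together with $G_{\alpha\beta}=0$ the $(1,0)$-form $\partial_{\flat}G$ is parallel; hence $q=|\partial_{\flat}G|^{2}$ is constant. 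Since $\varphi$ is real and $M$ is compact, $\varphi$ has critical points, where $q=4u|\nabla_{\flat}\varphi|^{2}$ vanishes; so the constant is $0$, $\nabla_{\flat}\varphi\equiv 0$, and (as $\mathcal{H}$ is bracket-generating) $d\varphi=0$. Thus $\varphi$ is constant.

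The main obstacle is the derivation and bookkeeping of the Bochner identity for $q$: one must verify that each torsion term and Reeb-direction term generated by the double commutation either vanishes (using CR-ness of $G$ and $G_{\alpha\beta}=0$) or reassembles into the single term $-4n|TG|^{2}$, and one must determine the sign of the curvature constant $a_{n}$ in the standard pseudo-hermitian conventions so that $\Ric_{\theta}\le 0$ genuinely closes the argument. A lesser technical point is justifying the weighted integration by parts across the critical set of $\varphi$.
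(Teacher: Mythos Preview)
Your strategy is sound and runs parallel to the paper's, but the execution differs and a few claims are loose. Both arguments are Bochner identities applied to the same global function: your $q=|\partial_{\flat}G|^{2}$ equals $4e^{-2\varphi}|\partialb\varphi|^{2}=4|\partialb u|^{2}$, where $u=\Re G=e^{-\varphi}\cos\psi$. The paper simply applies the Chanillo--Chiu--Yang identity \eqref{e:bochner} to the \emph{real} CR-pluriharmonic $u$: since $u_{\alpha\beta}=0$ and $P_{\alpha}u=0$, after integrating \eqref{e:bochner} one line of algebra gives
\[
0=-\tfrac{n+1}{n}\!\int_{M} e^{-2\varphi}|\Box_{\flat}\varphi|^{2}
+\int_{M} e^{-2\varphi}R_{\alpha\bbar}\varphi_{\alpha}\varphi_{\bbar},
\]
so $\Ric_{\theta}\le 0$ forces $\Box_{\flat}\varphi=0$ and hence $\varphi$ is a real CR function, i.e.\ constant. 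No weighted integration, no intermediate ``$TG=0$'' step, no claim that $Tq=0$ is needed in the derivation.

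Your hand-built identity can be made to work: using $G_{\alpha\beta}=0$, $G_{\alpha\bar\beta}=iG_{0}h_{\alpha\bar\beta}$, and the curvature commutation one gets $G_{\alpha;0}=\tfrac{i}{n+1}R_{\alpha}{}^{\delta}G_{\delta}$ (torsion pairs only with $G^{\beta}=h^{\bar\gamma\beta}G_{\bar\gamma}=0$ and drops out, as you say), and then a direct trace yields
\[
q_{\gamma}{}^{\gamma}+q_{\bar\gamma}{}^{\bar\gamma}
= -\tfrac{2}{n+1}\,R_{\alpha\bar\beta}G^{\bar\beta}\bar G^{\alpha}\;+\;2n|G_{0}|^{2},
\]
so $a_{n}=\tfrac{2}{n+1}>0$ and your coefficient $4n$ should be $2n$. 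Integrating this over $M$ already gives $\int R_{\alpha\bar\beta}G^{\bar\beta}\bar G^{\alpha}=0$ and $\int|TG|^{2}=0$; the relation $|\nabla_{\flat}q|^{2}=4|TG|^{2}q$ and the weight $(q+\varepsilon)^{-n}$ are unnecessary, and the assertion that ``Reeb cross-terms collapse because $Tq=0$'' is not used (nor known a priori). Your endgame via $TG\equiv0\Rightarrow\partial_{\flat}G$ parallel $\Rightarrow q$ constant $\Rightarrow q\equiv0$ is correct but more circuitous than the paper's one-step $\Box_{\flat}\varphi=0$. In short: right idea, same quantity, but the paper's route through the real part $u$ and the off-the-shelf identity \eqref{e:bochner} avoids exactly the bookkeeping you flagged as the main obstacle.
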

This theorem is  a CR analogue of Xu's theorem \cite{Xu}, and should be compared with Jerison-Lee theorem on the uniqueness of constant curvature metric on CR manifolds with nonpositive CR Yamabe invariant~\cite{JLCRYamabe}. In fact, a special case
of Theorem~\ref{thm:compact2} (when $M$ is pseudo-Einstein with vanishing torsion) follows from Jerison-Lee's theorem.

As explained in Section~3 below, in the case $n\geq 2$, $B_{\theta}(\varphi)=0$ if and only if the change of metric $\hat{\theta} = e^{2\varphi}\theta$ preserves the pseudo-hermitian torsion and the traceless Ricci tensor. Thus,  Theorem~1.2 and 1.3 say that, under the conditions given in either  one of the theorems, if $\theta$ and $\hat{\theta}$ are two pseudo-hermitian structures having the same pseudo-hermitian torsion and the same traceless Ricci tensor, then $\hat{\theta}/\theta$  must be a constant. We remark that (pseudo)-Riemannian conformal transformations preserving traceless component of Ricci tensor have been studied by many authors (see, e.g., \cite{BR,KR1, KR2} and reference therein).

Let us conclude this introduction by discussing the relation between the CR M\"{o}bius equation considered in this paper and several PDEs  already appeared in the literature. We begin by converting the \emph{nonlinear} system $B_{\theta}(\varphi)=0$ to a linear system as follows. By a well-known characterization of CR-pluriharmonic functions (cf. \cite{L}), if $n\geq 2$, then $B_{\theta}(\varphi) =0$
implies that $\varphi$ is a CR-pluriharmonic
function; locally, we can find a CR-pluriharmonic function $\psi$ such that $\varphi+i\psi$
is CR. If we set $ u = e^{-\varphi}\cos (\psi)$, then we can check by direct calculation that $u$ satisfies
\begin{equation} \label{e:main}
\nabla^2 u(Z,W) + \nabla^2u(W,Z) = \lambda_u G_{\theta}(Z,W), \quad Z, W \in T^{1,0}M \oplus T^{0,1}M,
\end{equation}  
where $G_{\theta}$ is the Levi metric, $\nabla$ is the Tanaka-Webster connection, and $\lambda_u = \frac{1}{n}\Delta_{\flat} u$.
Note that since $\psi$ is \emph{not} unique, neither is~$u$. Conversely, if a solution $u$ to \eqref{e:main} exists, then locally we can find a CR function $G$ such that $u=\Re(G)$. Locally, we can choose $G$ to be non-vanishing. Then $\varphi:=-\Re(\log G) = -\log |G|$ is a well-defined CR-pluriharmonic and satisfies $B_{\theta}(\varphi) = 0$ there. Therefore, each solution to~\eqref{e:main} gives rise to a \emph{local} solution of M\"{obius} equation and a local M\"{o}bius change of pseudo-hermitian structure.

Now we remark that PDEs similar to \eqref{e:main} have already appeared in several papers regarding the Obata-type problem of characterizing the CR sphere related to a Lichnerowicz-type estimate of the first positive eigenvalue of Laplacian (see, e.g., \cite{IV,IV2,LSW,LW} and references therein). In \cite{IV}, Ivanov and Vassilev consider \eqref{e:main} in connection with this problem  and prove that if it has a nontrivial solution $u$ with $\lambda u = Ku$, $K<0$, then $M$ must be CR equivalent to the CR sphere in $\mathbb{C}^{n+1}$ (of appropriate radius) with the ``standard'' pseudo-hermitian structure, \emph{provided} that $M$ is complete and has divergent-free torsion. In \cite{LW}, Li and Wang also prove the CR equivalence to sphere of compact manifolds admitting nontrivial solution $u$ of \eqref{e:main}, $\lambda_u = Ku$, $K<0$ \emph{without} any assumption on the torsion. A partial result on this problem is also obtained by Chang and Chiu \cite{CC}; we refer the readers to \cite{LW,IV,CC} for more details. We also mention \cite{LSW} in which Li, the author, and Wang prove a similar characterization of CR sphere, in connection with eigenvalue estimates for Kohn-Laplacian, in terms of the existence of a nontrivial complex-valued solution to the system  $\nabla^2 f(Z,W) = K fG_{\theta}(Z,W)$, where $K<0$, $Z\in T^{0,1}M$, and $W \in T^{1,0}M \oplus T^{0,1}M$ (Observe the similarity between this system and \eqref{e:main} above.) Therefore, this study, in which we consider \eqref{e:main} with general function $u$ rather than an eigenfunction, is motivated by, and can be regarded as a natural continuation of \cite{IV,LSW,LW}. In particular, Theorem~\ref{thm:intro1} can be regarded as a local version of global characterizations of the CR sphere obtained there.

\subsection*{Acknowledgment}
The author would like to thank S.-Y. Li for introducing him to pseudo-hermitian geometry when he was at University of California, Irvine. He also thanks P. Ebenfelt for teaching and discussions regarding CR manifolds, and N. Mir for interest and encouragement. The author also thanks anonymous referees for useful comments leading to improvement of the precision and readability of this paper.  Part of this work was written while the author enjoyed the hospitality of the Erwin-Schr\"{o}dinger Institute
for Mathematical Physics, University of Vienna in November 2015, which he would like to thank for its support.

\section{Preliminaries}
Throughout this paper, we use notation and terminology similar to those in \cite{L}. For basic notions of CR geometry that are not explained here, see \cite{BER99,DT,L}. Suppose that $M$ is a CR manifold of hypersurface type of CR dimension $n$ (real dimension $2n+1$), i.e., $M$ is endowed with a \emph{formally integrable} complex sub-bundle $T^{1,0}M \subset \mathbb{C}TM$ of complex dimension $n$ such that $T^{1,0} M \cap \overline{T^{1,0}M} = \{0\}$. The real bundle $H:=\Re(T^{1,0}M \oplus T^{0,1}M)$ is then a sub-bundle of the real tangent bundle $TM$ and carries a complex structure $J$, $J^2 =-1$ for which $T^{1,0}M$ is the eigenbundle of the complexification of $J$ corresponding to eigenvalue $i$. Clearly, $H$ has real codimension one in $TM$.

Let $\theta$ be a real 1-form that annihilates $H$. Once chosen, $\theta$ determines a symmetric bilinear form on $H$, the Levi-form, by
\begin{equation}
L_{\theta}(X,Y) = d\theta(X, JY).
\end{equation}
The Levi-form is extended by complex linearity to a bilinear form on $H \otimes \mathbb{C}$ which is Hermitian on $T^{1,0}M \times T^{0,1}M$. We only consider the case when the Levi-form is positive definite and so $\theta$ is \emph{contact}. We say that $M$ is strictly pseudo-convex and $(M,\theta)$ is called a pseudo-hermitian manifold.

On a pseudo-hermitian manifold, Levi-form yields a norm on real or complex tensor bundle over $H$. The Reeb vector field $T$ is uniquely determined by $T \lrcorner\, \theta =1$ and $T \lrcorner\, d\theta =0$.
We use $T$ to extend $J$ to $TM$ by setting $JT =0$; this extension depends on $\theta$ and sometimes denoted by $J_\theta$. Levi-form can also be extended to Webster metric by
\begin{equation}
g_{\theta} (X,Y) = d\theta(X, J_{\theta} Y) + \theta(X) \theta(Y).
\end{equation}

For local computations, we often choose a complex local frame $\{Z_{\alpha}  , Z_{\abar}, T\}$, where $\{Z_{\alpha}  \}$ 
is a local frame for $T^{1,0}M$, and $Z_{\abar} = \overline{Z_{\alpha}  }$. Here we use Greek indices as in \cite{L}; the indices $\alpha, \beta, \gamma$ run from $1$ to $n$ ($n=\dim_{CR}M)$.

Suppose that $\{\theta^{\alpha}, \theta^{\abar}, \theta\}$ is the local admissible coframe dual to $\{Z_{\alpha}, Z_{\abar}, T\}$. We can express the Levi-form as
\begin{equation}
d\theta = i h_{\alpha\bbar} \theta^{\alpha} \wedge \theta^{\bbar},
\end{equation}
Then the component of Levi-form $h_{\alpha\bbar}$ is a Hermitian matrix. We use $h_{\alpha\bbar}$ and its inverse $h^{\bbar \alpha}$ to lower and raise indices in usual way.
The connection 1-form $\omega_{\beta}{}^{\alpha}$ and torsion 1-form $\tau$ is determined by
\begin{align}
d\theta^{\alpha} &= \theta^{\beta} \wedge \omega_{\beta}{}^{\alpha} + A^{\alpha}{}_{\bbar}\, \theta \wedge \theta^{\bbar}\\
0 &= \tau_{\alpha}  \wedge \theta^{\alpha} \\
dh_{\alpha\bbar} &= \omega_{\alpha\bbar} + \omega_{\bbar \alpha}
\end{align}
where $A^{\alpha}{}_{\bbar}$ is the components of Webster torsion; $\tau^{\alpha} = A^{\alpha}{}_{\bbar} \theta^{\bbar}$. The connection form determines covariant derivative on 
$\mathbb{C}\otimes TM$ by
\begin{equation}
\nabla Z_{\alpha} = \omega_{\alpha}{}^{\beta} \otimes Z_{\beta},
\quad
\nabla Z_{\abar} = \omega_{\abar}{}^{\bbar} \otimes Z_{\bbar},
\quad
\nabla T =0.
\end{equation}
The curvature form of the Tanaka-Webster connection can be expressed as
\begin{equation}
d\omega_{\beta}{}^{\alpha} - \omega_{\beta}{}^{\gamma} \wedge \omega_{\gamma}{}^{\alpha}
=
R_{\beta}{}^{\alpha}{}_{\rho\bar{\sigma}} \theta^{\rho}\wedge\theta^{\bar{\sigma}} 
+
W_{\beta}{}^{\alpha}{}_{\rho} \theta^{\rho}\wedge \theta
-
W^{\alpha}{}_{\beta\bar{\rho}} \theta^{\bar{\rho}} \wedge \theta
+
i\theta_{\beta}\wedge \tau^{\alpha} 
-
i\tau_{\beta} \wedge \theta^{\alpha}.
\end{equation}
where the coefficients satisfy
\begin{equation}
R_{\beta\abar\rho\bar{\sigma}} = \overline{R_{\alpha\bbar\sigma\bar{\rho}}} = R_{\abar\beta\bar{\sigma}\rho} = R_{\rho\abar\beta\bar{\sigma}},
\quad
W_{\beta\abar\gamma} = W_{\gamma\abar\beta}.
\end{equation}
The tensor $R_{\beta}{}^{\alpha}{}_{\rho\bar{\sigma}}$ is called Webster curvature tensor. Webster \cite{W} shows that the Chern-Moser tensor \cite{CM}  can be computed as
\begin{align}
S_{\beta}{}^{\alpha}{}_{\gamma\bar{\sigma}}
=
R_{\beta}{}^{\alpha}{}_{\gamma\bar{\sigma}}
- \frac{R_{\beta}{}^{\alpha} h_{\gamma\bar{\sigma}} +R_{\gamma}{}^{\alpha} h_{\beta\bar{\sigma}} + \delta_{\beta}^{\alpha}R_{\gamma\bar{\sigma}} + \delta_{\gamma}^{\alpha} R_{\beta\bar{\sigma}}}{n+2}+ \frac{R(\delta_{\beta}^{\alpha}h_{\gamma\bar{\sigma}}+\delta_{\gamma}^{\alpha}h_{\beta\bar{\sigma}})}{(n+1)(n+2)} .
\end{align}
Here $R_{\alpha}{}^{\delta}{}_{\beta\cbar}$ is the pseudo-Hermitian curvature tensor. The pseudo-Hermitian Ricci tensor is defined by $R_{\alpha\bbar} = R_{\gamma}{}^{\gamma}{}_{\alpha\bbar}$.

Given a smooth function $f$. A choice of $\theta$ allows us to define $\bar{\partial}_{\flat}f $ as a genuine 1-form on $M$. Namely,
\begin{equation}
\bar{\partial}_{\flat} f = f_{\abar} \theta^{\abar}, \quad \partial_{\flat} f = f_{\alpha}\theta^{\alpha}
\end{equation}
so that
\begin{equation}
d f = \bar{\partial}_{\flat} f + \partial_{\flat} f + (Tf) \theta.
\end{equation}

In local calculation, we repeatedly use the following commutation relations
established in \cite{L}, reproduced here for reader's convenience.
\begin{align}
 f_{;\alpha\beta} - f_{;\beta\alpha} &= 0\\
 f_{;\alpha\bbar} - f_{;\bbar \alpha} &= if_{;0} h_{\alpha\bbar} \\
 f_{;0\alpha} - f_{;\alpha 0} &= A_{\alpha\beta} f^{;\beta} \\
 f_{;\alpha\beta \cbar} - f_{;\alpha\cbar\beta} &= if_{;\alpha 0} h_{\beta\cbar} + R_{\alpha}{}^{\delta}{}_{\beta\cbar} f_{;\delta}\\
 f_{;\alpha 0 \bbar} - f_{;\alpha\bbar 0}
 & = f_{;\alpha\gamma} A^{\gamma}{}_{\bbar} + f_{;\gamma} A^{\gamma}{}_{\bbar;\alpha}\\
f_{;\alpha\beta\gamma} - f_{;\alpha\gamma\beta} &= iA_{\alpha\gamma} f_{;\beta} - iA_{\alpha\beta} f_{;\gamma} \label{e:218}
\end{align}
We use indices preceded by a semicolon to indicate covariant derivatives, however, we often omit the semicolon as it poses no danger.

Let us write
\begin{equation}
B_{\theta}(\varphi)
 = B_{\alpha\beta}   \, \theta^{\alpha} \otimes \theta^{\beta} 
+
 B_{\abar\bbar} \, \theta^{\abar} \otimes \theta^{\bbar}
+
 B_{\alpha\bbar}   \, \theta^{\alpha} \otimes \theta^{\bbar} 
+
B_{\abar\beta} \, \theta^{\abar} \otimes \theta^{\beta}
\mod \theta.
\end{equation}
It is immediate from \eqref{e:defsch} that
\begin{align}
 & B_{\alpha\beta}    = 2\varphi_{\alpha\beta}  - 4 \varphi_{\alpha}  \varphi_{\beta}, \quad B_{\abar \bbar} = \overline{B_{\alpha\beta}   } \label{e:sch1} \\
 & B_{\alpha\bbar} = \varphi_{\alpha\bbar} +\varphi_{\bbar \alpha} -\tfrac{1}{n}\left(\Delta_{\flat} \varphi\right)  h_{\alpha\bbar}, \quad B_{\abar \beta} = \overline{B_{\alpha\bbar}   } \label{e:sch2}.
\end{align}
The tensor $B_{\theta}(\varphi)$ is trace-free, i.e.,
\begin{equation} 
\Lambda(B_{\theta}(\varphi))
:= h^{\bbar\alpha}   B_{\theta}(\varphi)(Z_{\alpha}  , Z_{\bbar}) = 0.
\end{equation}

We say that a real-valued function $u$ is \emph{CR-pluriharmonic} if locally $u=\Re F$ for some CR function $F$. Observe that if $B_{\theta}(\varphi) =0$ then from \eqref{e:sch2}, we see that
\begin{equation}
\varphi_{\alpha\bbar} - \tfrac{1}{n} \varphi_{\gamma}{}^{\gamma} h_{\alpha\bbar} = 0.
\end{equation}
By \cite{L}, when $n\geq 2$, $\varphi$ is CR-pluriharmonic function whenever $B_{\theta}(\varphi)=0$. Following Graham and Lee \cite{L,GL}, we define the operator
\begin{equation}\label{e:palpha}
 P_{\alpha}  f = f_{\cbar }{}^{\cbar }{}_{\alpha}  + inA_{\alpha\gamma}f^{\gamma} ,
\end{equation}
which satisfies
\begin{equation}
 \frac{n-1}{n} P_{\alpha}  f = \left(f_{\alpha\bbar} - \tfrac{1}{n} f_{\gamma}{}^{\gamma} h_{\alpha\bbar} \right)^{;\bbar}.
\end{equation}
Therefore, in case $n\geq 2$, $P_{\alpha}  \varphi = 0$ whenever $\varphi$ is CR-pluriharmonic. In dimension three, it is also proved in \cite{L} that $\varphi$ is CR-pluriharmonic if and only if $P_{\alpha}\varphi = 0$.

\section{Elementary properties of Schwarzian tensors and M\"{o}bius transformations on CR manifolds}
In this section, we discuss several elementary properties of Schwarzian for CR mappings.
The discussion is much similar to \cite[Section~2]{OS}, in which the author proved several properties for Schwarzian tensor on Riemannian manifolds.
\begin{proposition}\label{lem:additive}
Let  $\varphi, \sigma \colon M \to \mathbb{R}$ be smooth functions on $(M,\theta)$.
 Then
 \begin{equation} \label{e:additive}
  B_{\theta}(\varphi+\sigma) = B_{\theta}(\varphi) + B_{\hat{\theta}}(\sigma),
 \end{equation} 
where $\hat{\theta} = e^{2\varphi} \theta$.
\end{proposition}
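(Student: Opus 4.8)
The plan is to verify the cocycle identity \eqref{e:additive} componentwise, by comparing the Tanaka-Webster data of the three relevant pseudo-hermitian structures $\theta$, $\hat\theta = e^{2\varphi}\theta$, and $e^{2\sigma}\hat\theta = e^{2(\varphi+\sigma)}\theta$. The key input is the standard transformation law for the Tanaka-Webster connection and sub-Laplacian under a conformal change $\hat\theta = e^{2\varphi}\theta$; these are classical and appear in Lee's paper \cite{L}, so I would quote them. In particular, if $\hat h_{\alpha\bbar}$, $\hat\nabla$, $\hat\Delta_\flat$ denote the data of $\hat\theta$, then relative to the rescaled admissible coframe one has formulas of the schematic form $\hat\varphi_{;\alpha\beta} = \sigma_{;\alpha\beta} - 2\varphi_{;\alpha}\sigma_{;\beta}$ (no extra symmetric terms in $\alpha\leftrightarrow\beta$ beyond what torsion already produces) and $\hat\varphi_{;\alpha\bbar} + \hat\varphi_{;\bbar\alpha} = \varphi_{;\alpha\bbar} + \varphi_{;\bbar\alpha} + (\text{multiple of } h_{\alpha\bbar})$, together with $\hat h_{\alpha\bbar} = h_{\alpha\bbar}$ up to the conformal factor absorbed in the coframe. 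I would extract exactly the combinations that enter \eqref{e:sch1}–\eqref{e:sch2}.

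The main steps, in order, are as follows. First, record the conformal transformation rules for $h_{\alpha\bbar}$, for the second covariant derivatives $\varphi_{;\alpha\beta}$ and $\varphi_{;\alpha\bbar}+\varphi_{;\bbar\alpha}$, and for $\Delta_\flat$, under $\theta \mapsto \hat\theta = e^{2\varphi}\theta$. Second, compute $B_{\theta}(\varphi+\sigma)$ using \eqref{e:sch1}–\eqref{e:sch2} directly: the $(2,0)$ part gives $2(\varphi+\sigma)_{;\alpha\beta} - 4(\varphi+\sigma)_{;\alpha}(\varphi+\sigma)_{;\beta}$, which expands as $[2\varphi_{;\alpha\beta} - 4\varphi_{;\alpha}\varphi_{;\beta}] + [2\sigma_{;\alpha\beta} - 4\sigma_{;\alpha}\sigma_{;\beta}] - 4(\varphi_{;\alpha}\sigma_{;\beta} + \sigma_{;\alpha}\varphi_{;\beta})$; the first bracket is $B_{\alpha\beta}(\varphi)$ and I must show the remaining terms equal $\hat B_{\alpha\beta}(\sigma) = 2\hat\sigma_{;\alpha\beta} - 4\hat\sigma_{;\alpha}\hat\sigma_{;\beta}$ — here $\hat\sigma_{;\alpha\beta}$ is the $\hat\theta$-Hessian, which by the transformation rule equals $\sigma_{;\alpha\beta}$ minus a correction bilinear in $\nabla\varphi$ and $\nabla\sigma$, and this correction is precisely designed to cancel the cross terms. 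Third, do the same for the $(1,1)$ part, where the $-\tfrac1n(\Delta_\flat(\varphi+\sigma))h_{\alpha\bbar}$ term must be split using the transformation law $\hat\Delta_\flat \sigma$ versus $\Delta_\flat\sigma$; again the discrepancy is a multiple of $h_{\alpha\bbar}$, which is harmless against the trace-normalization and matches $\hat B_{\alpha\bbar}(\sigma)$. Fourth, note the $(0,2)$ and $(0,1)$ components follow by conjugation. Since all four tensorial components agree, \eqref{e:additive} holds.

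The main obstacle is bookkeeping: getting the conformal transformation law for the \emph{non-symmetric} Hessian $\varphi_{;\alpha\beta}$ exactly right, since the Tanaka-Webster connection has torsion and the rescaled coframe $\hat\theta^\alpha$ differs from $\theta^\alpha$ by terms involving $\partial_\flat\varphi$ and the Reeb direction. One must be careful that the torsion/Reeb corrections in the transformation of $\nabla^2\varphi$ are symmetric in $\alpha,\beta$ (or at least cancel in the combination appearing in $B$), so that they do not spoil the identity; this is the delicate point, and it is precisely the reason the definition \eqref{e:defsch} symmetrizes $\nabla^2\varphi(Z,W)+\nabla^2\varphi(W,Z)$ and uses the coefficient $4$ rather than $2$ on the $\partial_\flat\varphi\otimes\partial_\flat\varphi$ term. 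Once the transformation laws are pinned down, the verification is a short algebraic matching of coefficients. I would present it as: (a) quote the conformal change formulas; (b) expand both sides in the local coframe; (c) match the four components.
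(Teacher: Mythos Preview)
Your proposal is correct and follows essentially the same route as the paper: both proofs fix a local admissible coframe, invoke Lee's conformal transformation formulas for the Tanaka--Webster connection (hence for the Christoffel symbols, the Hessian, and $\Delta_\flat$) under $\hat\theta=e^{2\varphi}\theta$, and then verify \eqref{e:additive} componentwise by matching the $(2,0)$ and $(1,1)$ parts, with the remaining components following by conjugation. One small correction to your schematic: the transformation law is $\hat\nabla_\beta\hat\nabla_\alpha\sigma = e^{-2\varphi}(\sigma_{;\alpha\beta}-2\sigma_\alpha\varphi_\beta-2\sigma_\beta\varphi_\alpha)$, so the bilinear correction is already symmetric in $\alpha,\beta$ (not just one cross term), and it is exactly this full symmetric correction that cancels the $-4(\varphi_\alpha\sigma_\beta+\sigma_\alpha\varphi_\beta)$ arising from expanding $B_\theta(\varphi+\sigma)_{\alpha\beta}$.
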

\begin{proof}
Let $\{Z_{\alpha} \}$ be a local holomorphic frame and $\{\theta^{\alpha}\}$
its dual admissible coframe for $\theta$. For $\hat{\theta} = e^{2\varphi}\theta$, we put
\begin{equation}
\hat{\theta}^{\delta}   = e^{\varphi}(\theta^{\delta}   + 2i \varphi^{\delta}   \theta).
\end{equation}
Then $\{\hat{\theta}^{\alpha} \}$ is an admissible coframe for $\hat{\theta}$, with the same Levi matrix, i.e. $\hat{h}_{\alpha\bbar}  = h_{\alpha\bbar} $,  and dual to the holomorphic frame $\{\hat{Z}_{\delta} = e^{-\varphi} Z_{\delta}\}$.
Lee proved in \cite{L86} that the connection form $\hat{\omega}_{\beta}{}^{\alpha} $ for $\hat{\theta}$ is given by
\begin{align}
 \hat{\omega}_{\beta}{}^{\alpha}  
  = \omega_{\beta}{}^{\alpha} & + 2(\varphi_{\beta}\theta^{\alpha} - \varphi^{\alpha}  \theta_{\beta})
    + \delta_{\beta}^{\alpha} (\varphi_{\gamma}  \theta^{\gamma}  - \varphi^{\gamma}  \theta_{\gamma} ) \notag \\
    & + i(\varphi^{\alpha} {}_{\beta} +\varphi_{\beta}{}^{\alpha}  + 4\varphi_{\beta}\varphi^{\alpha}  +4\delta_{\beta}^{\alpha} \varphi_{\gamma}  \varphi^{\gamma} )\,\theta.
\end{align}
We get the following transformation rules for Christoffel symbols (see also \cite[p.~137]{DT})
\begin{align}
 \hat{\Gamma}_{\bbar\alpha} ^{\gamma}  = e^{-\varphi}(\Gamma_{\bbar\alpha} ^{\gamma}  - 2\varphi^{\gamma} h_{\alpha\bbar}  - \delta_{\alpha} ^{\gamma} \varphi_{\bbar})\\
 \hat{\Gamma}_{\beta\alpha} ^{\gamma}  = e^{-\varphi}({\Gamma}_{\beta\alpha} ^{\gamma}  +2\delta_{\beta}^{\gamma} \varphi_{\alpha}  + \delta_{\alpha} ^{\gamma}  \varphi_{\beta}).
\end{align}
Here, $\hat{\Gamma}_{\beta\alpha}^{\gamma}$ are the Christoffel symbols of the Tanaka-Webster connection $\hat{\nabla}$ of $\hat{\theta}$, evaluated with respect to
the frame $\hat{Z}_{\alpha}= e^{-\varphi}Z_{\alpha} $. Therefore,
\begin{equation}
\hat{\nabla}_{\beta}\hat{\nabla}_{\alpha}  \sigma
= e^{-2\varphi} \left\{{\nabla}_{\beta}{\nabla}_{\alpha} \sigma -2\sigma_{\alpha} \varphi_{\beta} - 2\sigma_{\beta} \varphi_{\alpha}  \right\}.
\end{equation}
We then compute (modulus $\theta$)
\begin{align}
 B_{\hat{\theta}}(\sigma)_{\alpha\beta}\, \hat{\theta}^{\alpha} \otimes \hat{\theta}^{\beta}
  & =
    2(\hat{\nabla}_{\beta}\hat{\nabla}_{\alpha}  \sigma - 2(\hat{\nabla}_{\alpha}  \sigma)(\hat{\nabla}_{\beta} \sigma))\, \hat{\theta}^{\alpha} \otimes \hat{\theta}^{\beta}\notag \\
  & = 
    2(\sigma_{\alpha\beta}  - 2\sigma_{\alpha} \sigma_{\beta} - 2\sigma_{\alpha} \varphi_{\beta} - 2\sigma_{\beta}\varphi_{\alpha} )\, \theta^{\alpha} \otimes \theta^{\beta} \mod \theta,
\end{align}
where $\varphi_{;\alpha\beta}$, etc.,  indicate derivatives with respect to $\nabla$. Consequently, modulus $\theta$, 
\begin{align}\label{e:adda}
B_{\theta}(\varphi)_{\alpha\beta}\, & \theta^{\alpha} \otimes \theta^{\beta} +  B_{\hat{\theta}}(\sigma)_{\alpha\beta}\,  \hat{\theta}^{\alpha} \otimes \hat{\theta}^{\beta} \notag \\
 & = 2(\varphi_{;\alpha\beta}  - 2\varphi_{\alpha} \varphi_{\beta} +\sigma_{;\alpha\beta}  - 2\sigma_{\alpha} \sigma_{\beta} - 2\sigma_{\alpha} \varphi_{\beta} - 2\sigma_{\beta}\varphi_{\alpha} )\, \theta^{\alpha} \otimes \theta^{\beta}  \notag  \\
 & = 2\{(\varphi+\sigma)_{;\alpha\beta}  - 2(\varphi+ \sigma)_{;\alpha} (\varphi+\sigma)_{;\beta}\}\, \theta^{\alpha} \otimes \theta^{\beta} \notag\\
 & = B_{\theta}(\varphi+\sigma)_{\alpha\beta}\, \theta^{\alpha} \otimes \theta^{\beta}.
\end{align}
To calculate $B_{\hat{\theta}}(\sigma)_{\alpha\bbar} $, we note that
\begin{equation}
\hat{\nabla}_{\bbar} \hat{\nabla}_{\alpha}  \sigma = e^{-2\varphi}\{\sigma_{;\alpha\bbar}  + 2\varphi^{\gamma} \sigma_{\gamma}  h_{\alpha\bbar} \}.
\end{equation}
Consequently,
\begin{equation}
\hat{\Delta}_{\flat} \sigma = e^{-2\varphi}\left(\Delta_\flat \sigma +4n\, \varphi^{\gamma}  \sigma_{\gamma} \right).
\end{equation}
Therefore,
\begin{align}
B_{\hat{\theta}}(\sigma)_{\alpha\bbar}\, \hat{\theta}^{\alpha}  \otimes \hat{\theta}^{\bbar}
 & =  \left\{\hat{\nabla}_{\bbar} \hat{\nabla}_{\alpha}  \sigma + \hat{\nabla}_{\alpha} \hat{\nabla}_{\bbar}  \sigma - \tfrac{1}{n}\left(\hat{\Delta}_{\flat}\sigma\right) \hat{h}_{\alpha\bbar} \right\}\, \hat{\theta}^{\alpha}  \otimes \hat{\theta}^{\bbar} \notag \\
 & = e^{-2\varphi}\biggl\{\sigma_{;\alpha\bbar} +\sigma_{;\bbar\alpha} +4\varphi^{\gamma} \sigma_{\gamma}  h_{\alpha\bbar} \notag \\
 & \qquad \qquad \qquad - \tfrac{1}{n}\left({\Delta_{\flat} \sigma}\right) h_{\alpha\bbar}  - 4\varphi^{\gamma}  \sigma_{\gamma}  h_{\alpha\bbar}      \biggr\} \hat{\theta}^{\alpha}  \otimes \hat{\theta}^{\bbar} \notag  \\
  & = B_{\theta}(\sigma)_{\alpha\bbar}  \, \theta^{\alpha} \otimes \theta^{\bbar} \mod \theta.
\end{align}
Hence,
\begin{equation} \label{e:addb}
B_{\theta}(\varphi)_{\alpha\bbar}\, \theta^{\alpha} \otimes \theta^{\bbar} + B_{\hat{\theta}}(\sigma)_{\alpha\beta}\, \hat{\theta}^{\alpha} \otimes \hat{\theta}^{\bbar}
 = B_{\theta}(\varphi + \sigma)_{\alpha\bbar}\theta^{\alpha} \otimes \theta^{\bbar} \mod \theta.
\end{equation}
From \eqref{e:adda} and \eqref{e:addb}, we easily obtain the \eqref{e:additive}, as desired.
\end{proof}
We consider compositions of CR mappings. Suppose $h \colon (M,\theta) \to (M',\theta')$ and 
$f \colon (M',\theta') \to (M'',\theta'')$ are CR mappings such that
\begin{equation}
 h^{\ast} \theta' = e^{2\varphi}\theta, \quad f^{\ast} \theta'' = e^{2\sigma}\theta'.
\end{equation}
Then 
\begin{equation}
(f\circ h)^{\ast}\theta'' = e^{2(\varphi + \sigma \circ h)}\theta.
\end{equation}
Therefore, by Lemma~\ref{lem:additive},
\begin{equation}
 \mathcal{S}_{\theta}(f\circ h) = B_{\theta}(\varphi + \sigma \circ h) = B_{\theta}(\varphi) + B_{e^{2\varphi}\theta}(\sigma\circ h).
\end{equation}
On the other hand, $h\colon (M,e^{2\varphi}\theta) \to (M', \theta')$ is a pseudo-hermitian diffeomorphism and therefore,
\begin{equation}
 B_{e^{2\varphi}\theta}(\sigma\circ h) = h^{\ast} B_{\theta'}(\sigma).
\end{equation}
We then obtain the following ``chain rule'':
\begin{equation}\label{e:chainrule}
  \mathcal{S}_{\theta}(f\circ h) =  \mathcal{S}_{\theta}(h) + h^{\ast}  \mathcal{S}_{\theta'}(f).
\end{equation}
This identity is the CR analogue of  (2.2) in \cite{OS}; the identity immediately implies that composites of CR M\"{o}bius 
transformations are M\"{o}bius. Put $h=f^{-1}$ into \eqref{e:chainrule}, we obtain, since $\mathcal{S}_{\theta'}(\mathrm{id})=0$, that
\begin{equation}
\mathcal{S}(f^{-1}) = -(f^{-1})^{\ast} \mathcal{S}(f).
\end{equation}
This implies that the inverse of a CR M\"{o}bius transformation is M\"{o}bius.

It can be also proved that if $\hat{\theta}$ is a M\"{o}bius metric with respect to $\theta$, then 
\begin{equation}
\mathcal{S}_{\theta}(f) = \mathcal{S}_{\hat{\theta}}(f),
\end{equation}
Detail is left to the reader.

We therefore obtain the following theorem, which is an analogue of Theorem~2.2 in \cite{OS} for CR geometry.
\begin{theorem}\label{thm:group}
 Composites and inverses of M\"{o}bius transformations are M\"{o}bius.
The M\"{o}bius  transformations of a pseudo-hermitian manifolds $(M,\theta)$ into itself form
a group $\Mob(M,\theta)$. 
If $\tilde\theta = e^{2\sigma}\theta$ is a M\"{o}bius  metric with respect to $\theta$, then $\Mob(M,\tilde{\theta}) = \Mob(M,\theta)$.
\end{theorem}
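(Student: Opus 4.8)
The plan is to deduce everything from the chain rule~\eqref{e:chainrule} and the additivity Proposition~\ref{lem:additive}, while keeping track of the extra CR-pluriharmonicity requirement present when $n=1$. The one preliminary fact I would record first is that CR-pluriharmonicity is preserved by sums, by multiplication by $-1$, and by precomposition with a CR diffeomorphism: all follow at once from the definition $u=\Re F$ with $F$ a CR function, using that a CR diffeomorphism $g$ satisfies $(\Re F)\circ g=\Re(F\circ g)$ with $F\circ g$ again CR.

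For composites, if $h^{\ast}\theta'=e^{2\varphi}\theta$, $f^{\ast}\theta''=e^{2\sigma}\theta'$ and $\mathcal S_\theta(h)=\mathcal S_{\theta'}(f)=0$, then \eqref{e:chainrule} gives $\mathcal S_\theta(f\circ h)=\mathcal S_\theta(h)+h^{\ast}\mathcal S_{\theta'}(f)=0$, while the conformal factor of $f\circ h$ is $\varphi+\sigma\circ h$, which is CR-pluriharmonic (when $n=1$) by the preliminary fact. For inverses, $\mathcal S(f^{-1})=-(f^{-1})^{\ast}\mathcal S(f)=0$, and since $(f^{-1})^{\ast}\theta=e^{-2\sigma\circ f^{-1}}\theta'$, the conformal factor $-\sigma\circ f^{-1}$ is again CR-pluriharmonic when $n=1$. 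The identity map has conformal factor $0$ and $\mathcal S(\mathrm{id})=B_\theta(0)=0$, hence is M\"obius; associativity being automatic, $\Mob(M,\theta)$ is a group.

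For the last assertion, I would set $\iota=\mathrm{id}\colon(M,\theta)\to(M,\tilde\theta)$, so that $\iota^{\ast}\tilde\theta=e^{2\sigma}\theta$ and $\mathcal S_\theta(\iota)=B_\theta(\sigma)=0$ by hypothesis; applying Proposition~\ref{lem:additive} to $\sigma$ and $-\sigma$ yields $0=B_\theta(0)=B_\theta(\sigma)+B_{\tilde\theta}(-\sigma)$, so $\mathcal S_{\tilde\theta}(\iota^{-1})=B_{\tilde\theta}(-\sigma)=0$; in particular ``$\tilde\theta$ is M\"obius with respect to $\theta$'' is a symmetric relation. Given $f\in\Mob(M,\tilde\theta)$, I regard it as the composite $\iota^{-1}\circ f\circ\iota$ (which equals $f$ as a map of sets) and apply \eqref{e:chainrule} twice:
\[
\mathcal S_\theta(f)=\mathcal S_\theta(\iota^{-1}\circ f\circ\iota)
=\mathcal S_\theta(\iota)+\iota^{\ast}\mathcal S_{\tilde\theta}(f)+(f\circ\iota)^{\ast}\mathcal S_{\tilde\theta}(\iota^{-1})
=\mathcal S_{\tilde\theta}(f),
\]
since $\iota$ is the identity set map (so $\iota^{\ast}$ acts trivially on tensors) and $\mathcal S_\theta(\iota)=\mathcal S_{\tilde\theta}(\iota^{-1})=0$. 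Thus $\mathcal S_\theta(f)=0\iff\mathcal S_{\tilde\theta}(f)=0$; and when $n=1$ the conformal factor of $f$ relative to $\theta$ differs from the one relative to $\tilde\theta$ by $\sigma-\sigma\circ f$, which is CR-pluriharmonic, so the pluriharmonicity clause transfers too. Hence $\Mob(M,\tilde\theta)\subseteq\Mob(M,\theta)$, and the reverse inclusion follows by symmetry, giving equality.

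I do not expect a serious obstacle: once the chain rule is in hand the argument is bookkeeping. The only genuine content is (i) that precomposition with a CR diffeomorphism preserves CR-pluriharmonicity, which is what makes the $n=1$ clause stable under composition and inversion, and (ii) the symmetry $B_\theta(\sigma)=0\Rightarrow B_{\tilde\theta}(-\sigma)=0$ furnished by Proposition~\ref{lem:additive}, which is what allows the conjugation argument for the last statement to be run in both directions.
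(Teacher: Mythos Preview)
Your proposal is correct and follows essentially the same route as the paper: the chain rule~\eqref{e:chainrule} handles composites and inverses, and the identity $\mathcal S_\theta(f)=\mathcal S_{\tilde\theta}(f)$ (which the paper states and leaves to the reader) gives the last assertion. Your conjugation argument via $\iota=\mathrm{id}$ together with the symmetry $B_\theta(\sigma)=0\Rightarrow B_{\tilde\theta}(-\sigma)=0$ from Proposition~\ref{lem:additive} is exactly the sort of detail the paper omits, and your explicit tracking of the CR-pluriharmonicity clause in dimension three is more careful than the paper's own treatment.
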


The M\"{o}bius group is then a (closed) Lie subgroup of the CR automorphism group $\mathrm{Aut}(M)$ and contains the group of homotheties. Therefore, 
\begin{equation}
\mathrm{Psh}(M,\theta) \subset \mathrm{Hty}(M,\theta) \subset \Mob(M,\theta) \subset \mathrm{Aut}(M).
\end{equation}
Here the subgroups  $\mathrm{Psh}(M,\theta)$ and $\mathrm{Hty}(M,\theta)$ are the pseudo-hermitian diffeomorphisms (those $f\in \aut(M)$ such that $f^{\ast}\theta = \theta$),
and the homotheties (those $f$ such that $f^{\ast} \theta = \lambda \theta$, $\lambda$ being a positive constant).

When $n\geq 2$, a pseudo-hermitian manifold $(M,\theta)$ is said to be \emph{pseudo-Einstein} if
the traceless Ricci tensor $R^{\circ}_{\alpha\bbar}:= R_{\alpha\bbar}  - \frac{1}{n}R h_{\alpha\bbar} $ vanishes. This vanishing condition is vacuous when $n=1$. A three-dimension pseudo-hermitian manifold is pseudo-Einstein if 
\begin{equation}
W_{\alpha} :=R_{;\alpha}  - i A_{\alpha\beta}{}^{;\beta}  = 0.
\end{equation}
The form $W_{\alpha}$ was introduced by Hirachi in \cite{Hi} in connection with the Szeg\"{o} kernel on three-dimensional manifolds. The definition of pseudo-Einstein structure in three-dimension above was introduced recently in \cite{CY}. In all dimension we say that $\theta$ is \emph{Einstein} if $\theta$ is torsion-free and pseudo-Einstein (cf. \cite{FL,Wang}). Observe that if $\theta$ is Einstein, then the Webster scalar curvature is constant.

A geometric interpretation of M\"{o}bius change of metrics is the following ``well-known'' proposition.
\begin{proposition}\label{thm:mobric}
 Suppose that $(M,\theta)$ is a pseudo-hermitian manifolds and $\hat{\theta} = e^{2\varphi}  \theta$.
 Then $\hat{\theta}$ is a M\"{o}bius  metric with respect to $\theta$ if and only if either
\begin{enumerate}[(i)]
 \item $n\geq 2$, 
\(
  \hat{A}_{\alpha\beta}  = A_{\alpha\beta}, \  \hat{R}^{\circ}_{\alpha\bbar} = R^{\circ}_{\alpha\bbar}%
\), or
 \item $n=1$, 
\(
 \hat{A}_{\alpha\beta}  = A_{\alpha\beta}, \  \hat{W}_{\alpha}  = e^{-3\varphi} W_{\alpha} .
\)
\end{enumerate}
Here $\hat{A}_{\alpha\beta} $ and $\hat{R}^{\circ}_{\alpha\bbar} $ are torsion and
traceless Ricci of $\hat{\theta}$ evaluated with respect to coframe $\{\hat{\theta}^{\alpha}  = \theta^{\alpha} + 2i\varphi^{\alpha}  \theta\}$. In particular, suppose that $\theta$ is Einstein, then $\hat{\theta}$ is M\"{o}bius with respect to $\theta$ if and only if $\hat{\theta}$ is also Einstein.
\end{proposition}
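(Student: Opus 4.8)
The plan is to read off both implications directly from the known transformation laws of the pseudo-hermitian invariants under the conformal change $\hat{\theta} = e^{2\varphi}\theta$, matching the relevant pieces against the components \eqref{e:sch1}--\eqref{e:sch2} of $B_{\theta}(\varphi)$. First I would record the transformation rule for the Webster torsion: starting from the formula for $\hat{\omega}_{\beta}{}^{\alpha}$ already displayed in the proof of Proposition~\ref{lem:additive} (equivalently, from Lee \cite{L86}; see also \cite{DT}), one obtains, with respect to the coframe of the statement, that $\hat{A}_{\alpha\beta} - A_{\alpha\beta}$ is proportional, by a nowhere-vanishing factor, to $2\varphi_{;\alpha\beta} - 4\varphi_{\alpha}\varphi_{\beta}$ --- the second covariant derivative $\varphi_{;\alpha\beta}$ being automatically symmetric --- that is, to $B_{\alpha\beta}$ by \eqref{e:sch1} (the precise constant is immaterial for the equivalence). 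Hence $\hat{A}_{\alpha\beta} = A_{\alpha\beta}$ if and only if $B_{\alpha\beta} = 0$.

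Next I would record the transformation of the Webster--Ricci and scalar curvatures under $\hat{\theta} = e^{2\varphi}\theta$ (again Lee \cite{L86}, \cite{DT}) and pass to the traceless part $\hat{R}^{\circ}_{\alpha\bbar} = \hat{R}_{\alpha\bbar} - \tfrac{1}{n}\hat{R}\,\hat{h}_{\alpha\bbar}$. The scalar contributions to $\hat{R}_{\alpha\bbar} - R_{\alpha\bbar}$ --- those built from $\Delta_{\flat}\varphi$, $\|\partial_{\flat}\varphi\|^{2}$ and the Reeb derivative $T\varphi$ --- all enter proportionally to $h_{\alpha\bbar}$, so together with the $\hat{R}\,\hat{h}_{\alpha\bbar}$ and $R\,h_{\alpha\bbar}$ terms they cancel out of the traceless part, leaving $\hat{R}^{\circ}_{\alpha\bbar} - R^{\circ}_{\alpha\bbar}$ proportional, by a nowhere-vanishing factor, to $\varphi_{\alpha\bbar} + \varphi_{\bbar\alpha} - \tfrac{1}{n}(\Delta_{\flat}\varphi)h_{\alpha\bbar}$, that is, to $B_{\alpha\bbar}$ by \eqref{e:sch2}. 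Combining the two computations yields, for $n\ge 2$, the chain of equivalences $B_{\theta}(\varphi) = 0 \iff B_{\alpha\beta} = 0 \text{ and } B_{\alpha\bbar} = 0 \iff \hat{A}_{\alpha\beta} = A_{\alpha\beta} \text{ and } \hat{R}^{\circ}_{\alpha\bbar} = R^{\circ}_{\alpha\bbar}$, which is part (i).

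For $n=1$ the mixed component $B_{\alpha\bbar}$ vanishes identically (the tensor $B_{\theta}(\varphi)$ is trace-free and $h_{\alpha\bbar}$ is $1\times 1$), so $B_{\theta}(\varphi) = 0$ reduces to $B_{11} = 0$, i.e.\ to $\hat{A}_{11} = A_{11}$. It then remains to show that, under the standing hypothesis $\hat{A}_{11} = A_{11}$, the function $\varphi$ is CR-pluriharmonic if and only if $\hat{W}_{\alpha} = e^{-3\varphi}W_{\alpha}$. For this I would invoke the transformation law of Hirachi's $1$-form $W_{\alpha} = R_{;\alpha} - iA_{\alpha\beta}{}^{;\beta}$ under $\hat{\theta} = e^{2\varphi}\theta$ (cf.\ \cite{Hi,CY}): differentiating the torsion and scalar-curvature laws and reorganizing by the commutation relations of Section~2, every torsion-dependent term in the resulting formula for $\hat{W}_{\alpha}$ is assembled out of $\hat{A}_{\alpha\beta} - A_{\alpha\beta}$ and its covariant derivatives, hence vanishes under the hypothesis; what survives is $\hat{W}_{\alpha} = e^{-3\varphi}\bigl(W_{\alpha} + c\,P_{\alpha}\varphi\bigr)$ for some $c\ne 0$, with $P_{\alpha}$ the third-order operator \eqref{e:palpha}. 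Since in dimension three $\varphi$ is CR-pluriharmonic precisely when $P_{\alpha}\varphi = 0$ (recalled at the end of Section~2), this proves part (ii). The ``in particular'' claim then follows immediately: if $\theta$ is Einstein then $A_{\alpha\beta} = 0$ together with $R^{\circ}_{\alpha\bbar} = 0$ (resp.\ $W_{\alpha} = 0$ in dimension three), so by (i)--(ii) $\hat{\theta}$ is M\"obius with respect to $\theta$ exactly when $\hat{\theta}$ is torsion-free and pseudo-Einstein, i.e.\ exactly when $\hat{\theta}$ is Einstein; in the three-dimensional case $\hat{A}_{11}=A_{11}=0$ together with $\hat{W}_{\alpha}=0=W_{\alpha}$ forces $P_{\alpha}\varphi=0$, so the required CR-pluriharmonicity of $\varphi$ is automatic.

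I expect the only genuine obstacle to be the three-dimensional bookkeeping in the last step: isolating the combination $P_{\alpha}\varphi$ inside the transformed third-order invariant $\hat{W}_{\alpha}$ once the torsion terms have been eliminated via $\hat{A}_{11} = A_{11}$ requires careful use of the commutation identities (in particular \eqref{e:218}) and of the $\|\partial_{\flat}\varphi\|^{2}$-terms in the scalar-curvature law. By contrast, the steps for $n\ge 2$ amount to a direct comparison of the standard transformation formulas with \eqref{e:sch1}--\eqref{e:sch2}.
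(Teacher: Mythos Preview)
Your proposal is correct and follows essentially the same route as the paper: both arguments read off the equivalences directly from Lee's transformation laws for $\hat{A}_{\alpha\beta}$ and $\hat{R}_{\alpha\bbar}$, matching them against \eqref{e:sch1}--\eqref{e:sch2}. The only noteworthy difference is in part~(ii): the paper simply quotes Hirachi's identity $\hat{W}_{\alpha} = e^{-3\varphi}\bigl(W_{\alpha} - 6P_{\alpha}\varphi\bigr)$ from \cite{Hi}, which holds \emph{unconditionally} for every $\varphi$, and then combines it with Lee's characterization $P_{\alpha}\varphi = 0 \iff \varphi$ CR-pluriharmonic. Your plan to derive this identity yourself under the auxiliary hypothesis $\hat{A}_{11} = A_{11}$ is an unnecessary detour, and the heuristic that ``every torsion-dependent term is assembled out of $\hat{A}_{\alpha\beta} - A_{\alpha\beta}$'' is not quite how the computation organizes itself---the torsion contributions are already absorbed into $W_{\alpha}$ and $P_{\alpha}\varphi$ without any cancellation needed. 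Simply citing Hirachi's formula (as the paper does) removes the ``bookkeeping obstacle'' you anticipate.
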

\begin{proof} Standard formulas for transformations of Webster Ricci and torsion are established in \cite{L}. Namely, for $\hat{\theta} = e^{2\varphi}\theta$, 
\begin{align}
 \hat{R}_{\alpha\bbar}  & = R_{\alpha\bbar}  - (n+2)(\varphi_{\alpha\bbar}  + \varphi_{\bbar \alpha})
 -
 (\Delta_{\flat}\varphi + 4(n+1)|\partialb \varphi|^2)h_{\alpha\bbar} \\
 \hat{A}_{\alpha\beta}  &= A_{\alpha\beta} + 2i\varphi_{\alpha\beta}  - 4i\varphi_{\alpha}  \varphi_{\beta},
\end{align}
These equations can be rewrite as
\begin{align}
i\hat{A}_{\alpha\beta}
 & =
iA_{\alpha\beta} 
-
B_{\theta}(\varphi)_{\alpha\beta}   \\
\hat{R}_{\alpha\bbar} 
 & =
 {R}_{\alpha\bbar} 
 - 
(n+2) B_{\theta}(\varphi)_{\alpha\bbar} 
-\tfrac{2n+2}{n}\left(\Delta_{\flat} \varphi + 2n|\bar{\partial}_{\flat} \varphi |^2 \right)h_{\alpha\bbar} 
\end{align}
Note in passing that in term of CR Schouten tensor, the 2nd equation can be written as
\begin{align}
\hat{P}^{\circ}_{\alpha\bbar}
  =  P^{\circ}_{\alpha\bbar} - B_{\theta}(\varphi)_{\alpha\bbar} .
\end{align}
where $\hat{P}_{\alpha\bbar}$ is the CR Schouten tensor $P_{\alpha\bbar}$, i.e.,
\begin{equation}
{P}_{\alpha\bbar}: = \left(\tfrac{1}{n+2}\right) \left(R_{\alpha\bbar} -\tfrac{1}{2n+2}Rh_{\alpha\bbar}\right),
\end{equation}
and $P^{\circ}_{\alpha\bbar}$ is the traceless component of $P_{\alpha\bbar}$. Then (i) 
immediately follows. 

When $n=1$, it was proved in \cite{Hi} that 
\begin{equation}
\hat{W}_{\alpha} 
= e^{-3\varphi}\left(W_{\alpha} -6P_{\alpha}\varphi\right).
\end{equation}
where $P_{\alpha}$ is the Graham-Lee operator characterizing the CR-pluriharmonic functions in three-dimension. 
In particular, $\hat{W}_{\alpha} = e^{-3\varphi}W_{\alpha}$ is equivalent to that $P_{\alpha} \varphi =0$, which in turn, is equivalent to $\varphi$ being CR-pluriharmonic by \cite{L}. Then (ii) follows immediately.
\end{proof}
From Theorem~\ref{thm:mobric}, we see that finding an Einstein metric on a CR manifold amounts  to solving the \emph{nonhomogeneous} M\"{o}bius equation. In fact, consider the equation
\begin{equation}\label{e:mobius}
B_{\theta}(\varphi) = E,
\end{equation}
where $E$ is a traceless tensor of the same type. Suppose that $\theta$ is a pseudo-hermitian
structure on $M$ and $E$ is given by
\begin{equation}\label{e:tsem}
E_{\alpha\beta}  = -iA_{\alpha\beta}, \quad E_{\alpha\bbar}  = \tfrac{1}{n+2} \left(R_{\alpha\bbar}  - \tfrac{1}{n}R h_{\alpha\bbar} \right),
\end{equation}
then a solution to $B_{\theta}(\varphi) = E$ gives rise to an Einstein metric on $M$.
\begin{corollary}
Let $(M,\theta)$ be a pseudo-hermitian manifold. Suppose that $B_{\theta}(\varphi) = E$ is solvable by a function $\varphi$, where $E$ is given by~\eqref{e:tsem}. Then $\hat{\theta} = e^{2\varphi}\theta$ is Einstein;  and $(M,\theta)$ is locally equivalent to a unit circle bundle in a Hermitian line bundle over a K\"{a}hler-Einstein manifold.
\end{corollary}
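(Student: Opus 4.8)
The plan is to treat the two assertions separately. The Einstein claim for $\hat\theta = e^{2\varphi}\theta$ is essentially a restatement of the transformation rules already recorded in the proof of Proposition~\ref{thm:mobric}; the identification of $(M,\theta)$ as a circle bundle is then the classical structure theory of torsion-free pseudo-Einstein manifolds, which I would invoke.

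For the first assertion, I would compare $B_\theta(\varphi) = E$ with the formulas $i\hat A_{\alpha\beta} = iA_{\alpha\beta} - B_\theta(\varphi)_{\alpha\beta}$ and $\hat P^\circ_{\alpha\bbar} = P^\circ_{\alpha\bbar} - B_\theta(\varphi)_{\alpha\bbar}$ from the proof of Proposition~\ref{thm:mobric}. Since the traceless part of the CR Schouten tensor is $P^\circ_{\alpha\bbar} = \tfrac{1}{n+2}(R_{\alpha\bbar} - \tfrac1n R h_{\alpha\bbar})$, the datum \eqref{e:tsem} is precisely the one for which both right-hand sides vanish, so $\hat A_{\alpha\beta} = 0$ and $\hat R^\circ_{\alpha\bbar} = 0$; that is, $\hat\theta$ is torsion-free and pseudo-Einstein, hence Einstein. (When $n=1$ the traceless Ricci part of \eqref{e:tsem} is vacuous, so there the statement should be understood with the datum augmented to encode $\hat W_\alpha = 0$ as well.)

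For the second assertion I would use that the vanishing of $\hat A_{\alpha\beta}$ is equivalent to $[\hat T, T^{1,0}M] \subset T^{1,0}M$, where $\hat T$ is the Reeb field of $\hat\theta$; thus $\hat T$ generates a local one-parameter group of pseudo-hermitian automorphisms. Near any point of $M$, the Reeb flow then exhibits $M$ as a locally trivial circle bundle $\pi\colon M \to X$ over the leaf space $X$: the CR structure descends to an integrable complex structure on $X$ (by formal integrability of $T^{1,0}M$ and Newlander--Nirenberg), and since $\hat T \lrcorner\, d\hat\theta = 0$ and $\mathcal{L}_{\hat T}\, d\hat\theta = 0$ we get $d\hat\theta = \pi^\ast\omega$ for a closed positive $(1,1)$-form $\omega$ on $X$, so $(X,\omega)$ is K\"{a}hler. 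Moreover $\hat\theta$ is a connection form for $\pi$ with curvature $\pi^\ast\omega$, so, writing $\omega = i\partial\bar\partial\phi$ on a contractible chart, $M$ is locally the unit circle bundle of the Hermitian holomorphic line bundle $(L, e^{-\phi})$ with $L$ trivial (equivalently, a suitable root of $K_X^{\pm1}$ in the normalization adapted to $\omega$), and its induced pseudo-hermitian structure coincides with $\hat\theta$. Finally, the pseudo-Einstein condition $\hat R^\circ_{\alpha\bbar} = 0$ descends, via the standard relation between the Webster Ricci of a torsion-free $\hat\theta$ and the Ricci form of the transverse K\"{a}hler metric $\omega$ (see \cite{DT} and \cite{FL,Wang}), to the condition that $\mathrm{Ric}(\omega)$ be proportional to $\omega$, i.e.\ that $(X,\omega)$ be K\"{a}hler--Einstein. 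Since $\theta$ and $\hat\theta$ induce the same CR structure on $M$, this gives the claimed local equivalence.

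I expect the main difficulty to be bookkeeping rather than conceptual. One must set up the local circle-bundle picture carefully: the Reeb flow need not be periodic, so one works near a point with an $\mathbb{R}$-action and normalizes the period; and one must be precise about the constants in the relation between the Webster Ricci of $\hat\theta$ and the Ricci form of $\omega$ --- in particular about the contribution of the curvature of $L$ --- to be certain that only a pure-trace term is discarded when passing to traceless parts. Both points are standard in the pseudo-hermitian / Sasakian literature, so the corollary follows once this is carried out.
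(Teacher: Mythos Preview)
Your proposal is correct and in fact more detailed than what the paper provides: the corollary is stated without proof there, the first assertion being an immediate consequence of the transformation formulas displayed just before it in the proof of Proposition~\ref{thm:mobric} (exactly as you argue), and the second being a direct appeal to the classical structure theorem for torsion-free pseudo-Einstein manifolds from \cite{L,W,FL}. Your sketch of the Reeb-flow/transverse-K\"ahler argument is the standard route to that theorem, so your approach and the paper's implicit one coincide; the only difference is that you unpack the cited result rather than invoking it.
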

Similar to Riemannian case in \cite{OS}, Proposition~\ref{lem:additive} shows that if $\tilde{\varphi}$ is a particular solution of the \emph{nonlinear} equation \eqref{e:mobius},
then determining all solutions reduces to solving the homogeneous equation $B_{\hat{\theta}}(\varphi) =0$, where $\hat{\theta} = e^{2\tilde{\varphi}} \theta$.
From now on, we focus on the homogeneous equation $B_{\theta}(\varphi) =0$.
Also, following Osgood and Stowe, we introduce the following definition.
\begin{definition}\label{def:fi}
We say that \eqref{e:mobius} is \emph{fully integrable} at $p\in M$ if for every (1,0)-form $\omega$ near~$p$, there exists a local solution $\varphi$ satisfying $\omega|_p = \partial_{\flat} \varphi|_p$.
\end{definition}
We observe that the fully integrability is an open condition: if $B_{\theta}(\varphi)=0$ is fully integrable at~$p$, then it is so for all points near $p$. To see this, we can  instead consider the (locally) equivalent linear system \eqref{e:main}. If \eqref{e:main} is fully integrable at $p$, then we can find $n$ solutions $u^1, u^2, \dots , u^n$ such that $\{\partial_{\flat} u^j|_p \mid j=1,2,\dots n\}$ forms a basis of $(T^{1,0})^{\ast}_pM$. There is a neighborhood $V$ of $p$ such that $u^j$ is defined on $V$ for all $j$, and for all $q\in V$, $\{\partial_{\flat} u^j|_q \mid j=1,2,\dots n\}$ is a basis $(T^{1,0})^{\ast}_qM$. Therefore, for any (1,0)-form $\omega$ near $q$, the equality $\omega|_q = \partial_{\flat} u|_q$ is satisfied for a suitable linear combination $u$ of $u^j$. That $u$ is a solution of \eqref{e:main} follows from the linearity of~\eqref{e:main}.
\section{M\"{o}bius metrics on Heisenberg manifold}
It is helpful to have explicit formulas for solutions to the M\"{o}bius equation on some model manifolds, e.g., Heisenberg manifold. In Heisenberg model, the solutions were essentially found by Jerison and Lee in their celebrated paper \cite{JLCRYamabe} on the Yamabe problem on CR manifolds.
 
 The Heisenberg group $\mathbb{H}^n$ is the set
 $\mathbb{C}^n\times \mathbb{R}$ with coordinates $(z,t)$ and group law
 \begin{equation}
 (z,t)(\zeta,\tau)
 =
 \left( z+\zeta, t+\tau + 2\Im z\cdot \bar{\zeta}\right).
 \end{equation}
For a holomorphic frame, we take the left-invariant vector fields
\begin{equation}
Z_{\alpha} = \frac{\partial}{\partial z^{\alpha}} + i \bar{z}^{\alpha} \frac{\partial }{\partial t},
\end{equation}
which make $\mathbb{H}^n$ a strictly pseudo-convex CR manifold. The ``standard'' contact form is
\begin{equation}
\Theta = \tfrac{1}{2}dt + \tfrac{1}{2}\sum (iz^{\alpha} d\bar{z}^{\alpha} - i\bar{z}^{\alpha}dz^{\alpha}).
\end{equation}
Observe that the Levi matrix is identity (i.e., $h_{\alpha\bbar}  = \delta_{\alpha\beta}$) and the Reeb vector field is $T=\partial /\partial t$.
\begin{proposition}\label{prop:heisenberg}
 The equation $B_{\Theta}(\varphi) = 0$ is fully integrable on $\mathbb{H}^n$.
 The set of CR pluriharmonic solutions consists of functions of the form
 \begin{equation} \label{e:heisenbergsol}
  \varphi(z, t) = -\log |\kappa t + i \kappa |z|^2 + z \cdot \mu + \lambda | + C.
 \end{equation}
 where $\kappa, \lambda \in \mathbb{C}$, $\mu \in \mathbb{C}^n$, and $C\in \mathbb{R}$. Consequently, the M\"{o}bius group of $\mathbb{H}^n$ is equal to the group of CR automorphisms.
\end{proposition}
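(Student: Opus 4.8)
The plan is to exploit the equivalence, established in the introduction, between solutions $\varphi$ of the M\"obius equation $B_{\Theta}(\varphi)=0$ with $\varphi$ CR-pluriharmonic and solutions $u$ of the linear system \eqref{e:main}, via $u = e^{-\varphi}\cos\psi$ where $\varphi+i\psi$ is CR; equivalently, one writes $u = \Re G$ for a non-vanishing CR function $G$ with $\varphi = -\log|G|$. So the first step is to solve \eqref{e:main} explicitly on $(\mathbb{H}^n,\Theta)$. Since $\Theta$ has vanishing torsion ($A_{\alpha\beta}=0$) and vanishing Webster curvature, the Tanaka-Webster connection is flat in the horizontal directions: in the standard frame $Z_\alpha = \partial_{z^\alpha} + i\bar z^\alpha\partial_t$ all Christoffel symbols vanish, so $\nabla^2 u(Z,W)$ reduces to $Z(Wu)$ for $Z,W$ among the $Z_\alpha, Z_{\bar\alpha}$. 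Thus \eqref{e:main} becomes the system of second-order constant-coefficient PDEs $Z_\alpha Z_\beta u = 0$, $Z_{\bar\alpha}Z_{\bar\beta}u = 0$ (off-diagonal $\alpha\neq\beta$ forces these; diagonal terms are absorbed in $\lambda_u$), together with $Z_\alpha Z_{\bar\beta} u + Z_{\bar\beta}Z_\alpha u = \frac{1}{n}(\Delta_\flat u)\delta_{\alpha\beta}$, i.e. the traceless part of the Hermitian Hessian vanishes. Using the commutation relation $f_{;\alpha\bar\beta}-f_{;\bar\beta\alpha}=if_{;0}h_{\alpha\bar\beta}$ this last condition is $u_{;\alpha\bar\beta} - \frac{1}{n}u_{;\gamma}{}^\gamma h_{\alpha\bar\beta}=0$.

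The second step is to integrate this overdetermined system. From $Z_\alpha Z_\beta u = 0$ for $\alpha\neq\beta$ and the trace-free Hermitian Hessian condition, differentiating once more and using flatness one shows all third-order horizontal derivatives of $u$ vanish, so $u$ is (horizontally) a polynomial of degree $\le 2$ in the coordinates, with the $t$-dependence entering through $Z_\alpha$. Concretely, writing out $Z_\alpha = \partial_{z^\alpha}+i\bar z^\alpha\partial_t$ and solving, the CR functions $G$ with $u=\Re G$ that arise are exactly the first-degree ``CR-affine'' functions on $\mathbb{H}^n$: $G(z,t) = \kappa(t+i|z|^2) + z\cdot\mu + \lambda$ with $\kappa,\lambda\in\mathbb{C}$, $\mu\in\mathbb{C}^n$. (One checks directly that $w:=t+i|z|^2$ is CR, $Z_\alpha w = 0$, and that $Z_\alpha z^\beta = \delta_\alpha^\beta$, $Z_\alpha Z_\beta w = 0$, so these are precisely the CR functions annihilated by all $Z_\alpha Z_\beta$ whose real parts have trace-free Hermitian Hessian.) Hence $\varphi = -\log|G| + C$ as in \eqref{e:heisenbergsol}, after absorbing a real additive constant; the constant $C$ is arbitrary since $B_\Theta$ kills constants.

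The third step records full integrability: given a $(1,0)$-form $\omega = \omega_\alpha\,\theta^\alpha$ at a point $p=(z_0,t_0)$, we must produce a solution $\varphi$ with $\varphi_\alpha|_p = \omega_\alpha$. Since $\varphi_\alpha = -Z_\alpha\log G = -G_\alpha/G$ and we are free to choose $\mu,\kappa,\lambda$, the value of $(\varphi_\alpha|_p)$ can be prescribed arbitrarily; indeed even taking $\kappa=0$, $G = z\cdot\mu+\lambda$, one gets $\varphi_\alpha|_p = -\mu_\alpha/(z_0\cdot\mu+\lambda)$, which ranges over all of $\mathbb{C}^n$ as $\mu,\lambda$ vary (keeping $G(p)\neq0$). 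This gives full integrability at every point, hence on $\mathbb{H}^n$. Finally, for the last assertion: since the M\"obius group is always contained in $\mathrm{Aut}(M)$ (the chain: $\mathrm{Psh}\subset\mathrm{Hty}\subset\Mob\subset\mathrm{Aut}$), it suffices to show every CR automorphism of $\mathbb{H}^n$ is M\"obius. The CR automorphisms of $\mathbb{H}^n$ (equivalently of the sphere minus a point) are known explicitly — dilations, translations, rotations, and the inversion — and for each such $f$ one computes $f^*\Theta = e^{2\varphi}\Theta$ and checks $\varphi$ is of the form \eqref{e:heisenbergsol}, so $B_\Theta(\varphi)=0$ and $\varphi$ is CR-pluriharmonic; alternatively, one invokes the already-proved characterization (Theorem~\ref{thm:intro1}, or directly Theorem~\ref{thm:cstcur}) that on a CR spherical manifold the M\"obius equation is fully integrable and that $\Mob$ then exhausts the local automorphisms. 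I expect the main obstacle to be the bookkeeping in Step 2: carefully extracting from the mixed system $Z_\alpha Z_\beta u=0$ plus trace-free Hermitian Hessian exactly which CR functions $G$ occur — in particular verifying that no genuinely quadratic CR terms (beyond the harmless $|z|^2$ hidden inside $w = t+i|z|^2$) survive — and confirming that the real additive freedom matches the stated constant $C$.
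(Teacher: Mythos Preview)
Your overall strategy---pass from $\varphi$ to a CR function $G$ and show $G$ is CR-affine---matches the paper's, but Step~2 has a real gap. The claim that ``all third-order horizontal derivatives of $u$ vanish'' is false: from $u_{\alpha\bar\beta} = v\,\delta_{\alpha\beta}$ one gets $u_{\alpha\bar\beta;\gamma} = v_\gamma\,\delta_{\alpha\beta}$, which need not vanish (indeed $v_\gamma = iu_{0;\gamma}$, generically nonzero). The paper sidesteps this by working with $G=e^{-\varphi-i\psi}$ directly rather than with $u=\Re G$: since $G$ is CR one has $G_{\bar\alpha}=0$, and from $B_\Theta(\varphi)=0$ one computes $G_{\alpha\beta}=-G\,B_\Theta(\varphi)_{\alpha\beta}=0$. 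Then $G_{0\bar\alpha}=G_{\bar\alpha 0}=0$ trivially, and a short commutation-relation computation yields $G_{0\alpha}=-\tfrac{1}{n}G_{\alpha 0}$, forcing $G_{0\alpha}=0$; hence $G_0$ is constant. Subtracting $G_0\,w$ with $w=t+i|z|^2$ leaves a CR function $K$ with $\partial K/\partial t=0$ and $K_{\alpha\beta}=0$, so $K$ is a linear polynomial in $z$. This is the clean route; your attempt to bound the degree of $u$ does not close.

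Two smaller points. First, your parenthetical ``off-diagonal $\alpha\neq\beta$ forces these; diagonal terms are absorbed in $\lambda_u$'' is a confusion: $G_\theta(Z_\alpha,Z_\beta)=0$ for \emph{all} pairs $\alpha,\beta$, so $u_{\alpha\beta}=0$ holds on the diagonal as well; nothing is absorbed into $\lambda_u$. Second, for the final assertion about $\Mob(\mathbb{H}^n)=\aut(\mathbb{H}^n)$, invoking Theorem~\ref{thm:cstcur} would be circular, since its sufficiency direction uses this very proposition. Stick with your first option---the explicit description of CR automorphisms of $\mathbb{H}^n$ and their conformal factors from \cite{JLCRYamabe}---which is exactly what the paper does.
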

\begin{proof} The proof is essentially given in \cite[p. 10]{JLextremal}. We will follows the same calculation. Suppose that $\varphi$ is a CR pluriharmonic solution to $B_{\Theta}(\varphi)=0$ (when $n\geq2$, all solutions are automatically CR pluriharmonic); 
we can find a real-valued function $\psi$ such that $\varphi+i\psi$ is CR. This implies $G:= e^{-\varphi - i\psi}$ is CR. By direct calculation
\begin{equation}
G_{\alpha\beta} = e^{-\varphi - i\psi}(4\varphi_{\alpha} \varphi_{\beta} - 2\varphi_{\alpha\beta}) = -G B_{\Theta}(\varphi)_{\alpha\beta}
=0.
\end{equation}
Since $G$ is CR, one has
\begin{equation}
G_{0\abar} = G_{\abar 0} = 0.
\end{equation}
Therefore,
\begin{equation}
G_{0\alpha} = -\tfrac{i}{n}(G_{\beta}{}^{\beta}{}_{\alpha} - G_{\bbar}{}^{\bbar}{}_{\alpha})
=
-\tfrac{i}{n}(G_{\beta\alpha}{}^{\beta} - iG_{\alpha 0}) = -\tfrac{1}{n} G_{\alpha 0}.
\end{equation}
Therefore, $G_{0\alpha} = G_{0\abar}=0$ and thus $G_0$ is a constant. Let $w = t+i|z|^2$ and consider the CR function $K = G - G_0w$. Then direct calculation shows that $\partial K/\partial t =0$ and so $K$ is a holomorphic in $\{z^{\alpha}\}$. Moreover,
\begin{equation}
\frac{\partial ^2 K}{\partial z^{\alpha} \partial z^{\beta}} = K_{\alpha\beta} = G_{\alpha\beta} - G_0w_{\alpha\beta} = 0.
\end{equation}
Hence, $K$ is a linear polynomial in $z$ and so
\begin{equation}
G = G_0w + z\cdot \mu + \lambda.
\end{equation}
Therefore,
\begin{equation}
\varphi(z,t) = -\log |G| = -\log |\kappa t+i\kappa |z|^2 + z\cdot \mu + \lambda |, \quad \kappa = G_0,
\end{equation}
as desired.

It is well-known (see \cite{JLCRYamabe}) that then for any CR automorphism $\Psi$ of $\mathbb{H}^n$, $\Psi^{\ast} \Theta = e^{2\varphi} \Theta$, where $\varphi$ is in the form \eqref{e:heisenbergsol} with $4\Im (\bar{\kappa}\lambda) >|\mu|^2$. In particular, $\Psi$ is a M\"{o}bius transformation of $(\mathbb{H}^n, \Theta)$.

To see that the M\"{o}bius equation is fully integrable, we compute,
\begin{equation}
 \varphi_{\alpha} = -G^{-1}\left(i \kappa \bar{z}^{\alpha} + \tfrac{1}{2}{\mu}^{\alpha}\right).
\end{equation}
It then follows that the solution $\varphi$ can be chosen so that the $\partial_\flat \varphi$ at a point for which $G\ne 0$ equals any prescribed value.
\end{proof}
The solutions thus obtained can be transferred to the CR sphere via the Cayley transform (see \cite{JLCRYamabe} for detail). We remark that a related result on the sphere was obtained in \cite{LiLuk}. Namely, Li and Luk determined all $\varphi$ for which $e^{2\varphi} \theta_0$ has vanishing pseudo-hermitian torsion. Here $\theta_0$ is the standard pseudo-hermitian structure on the sphere. Those $\varphi$ are for which $e^{-2\varphi}$ is harmonic quadratic polynomials \cite[Theorem~1.2]{LiLuk}).
\begin{corollary}
Suppose $f\colon (M,\theta) \to (\mathbb{S}^{2n+1},\theta_0)$ be a CR (local) diffeomorphism. Then
\begin{equation}
\mathcal{S}_{\theta}(f) = \mathcal{S}_{\theta}(\Psi\circ f)
\end{equation}
for all $\Psi \in \mathrm{Aut}(\mathbb{S}^{2n+1})$.
\end{corollary}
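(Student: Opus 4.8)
The plan is to deduce this directly from the chain rule \eqref{e:chainrule} together with Proposition~\ref{prop:heisenberg}. First I would observe that, since the statement is about the CR sphere $(\mathbb{S}^{2n+1},\theta_0)$, the right framework is the Cayley transform: any $\Psi \in \mathrm{Aut}(\mathbb{S}^{2n+1})$ is, after transferring via the Cayley transform, a CR automorphism of the Heisenberg group $(\mathbb{H}^n,\Theta)$, and Proposition~\ref{prop:heisenberg} tells us precisely that the M\"obius group of $\mathbb{H}^n$ equals its full CR automorphism group. Transporting this back through the Cayley transform, which identifies $(\mathbb{H}^n,\Theta)$ conformally with $(\mathbb{S}^{2n+1}\setminus\{\text{pt}\},\theta_0)$, we conclude that every $\Psi \in \mathrm{Aut}(\mathbb{S}^{2n+1})$ is a M\"obius transformation of $(\mathbb{S}^{2n+1},\theta_0)$; equivalently $\mathcal{S}_{\theta_0}(\Psi) = 0$. (One should check that the conjugation by the Cayley transform preserves the M\"obius property, but this is immediate from Theorem~\ref{thm:group}, since the Cayley transform is itself a pseudo-conformal diffeomorphism and the M\"obius group is intrinsically attached to the CR structure with a chosen contact form only up to M\"obius change.)

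With $\mathcal{S}_{\theta_0}(\Psi) = 0$ in hand for all $\Psi \in \mathrm{Aut}(\mathbb{S}^{2n+1})$, the corollary follows from one application of the chain rule. Writing $g = \Psi \circ f$ with $h = f\colon (M,\theta)\to(\mathbb{S}^{2n+1},\theta_0)$ and the outer map $\Psi\colon(\mathbb{S}^{2n+1},\theta_0)\to(\mathbb{S}^{2n+1},\theta_0)$, identity \eqref{e:chainrule} gives
\begin{equation}
\mathcal{S}_{\theta}(\Psi\circ f) = \mathcal{S}_{\theta}(f) + f^{\ast}\mathcal{S}_{\theta_0}(\Psi) = \mathcal{S}_{\theta}(f) + f^{\ast}(0) = \mathcal{S}_{\theta}(f),
\end{equation}
which is exactly the claimed equality. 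Note that \eqref{e:chainrule} as stated requires the middle target $(\mathbb{S}^{2n+1},\theta_0)$ to be the same with a fixed contact form for both $h$ and $f$; here that is automatic since $\Psi$ is a CR self-map of the sphere with the fixed structure $\theta_0$, and $\Psi^{\ast}\theta_0 = e^{2\varphi_\Psi}\theta_0$ with $\varphi_\Psi$ one of the functions appearing in \eqref{e:heisenbergsol} (transported by Cayley), so all the hypotheses of the chain rule are met.

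The only genuine content is thus the first paragraph: that the standard CR automorphisms of the sphere are M\"obius, i.e.\ that the exponents $\varphi_\Psi$ in $\Psi^{\ast}\theta_0 = e^{2\varphi_\Psi}\theta_0$ satisfy $B_{\theta_0}(\varphi_\Psi) = 0$ (and, when $n=1$, that $\varphi_\Psi$ is CR-pluriharmonic — which it is, being minus the log modulus of a CR function, namely $-\log|G|$ with $G$ as in the proof of Proposition~\ref{prop:heisenberg}). I expect the main (minor) obstacle to be bookkeeping around the Cayley transform: verifying that conjugating a M\"obius transformation of $\mathbb{H}^n$ by the Cayley transform yields a M\"obius transformation of $\mathbb{S}^{2n+1}$, and that the single point removed by the Cayley transform causes no trouble (it does not, since the vanishing of a smooth tensor on a dense open set forces vanishing everywhere by continuity). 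Everything else is a formal consequence of the already-established chain rule.
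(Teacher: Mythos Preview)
Your proposal is correct and is exactly the argument the paper has in mind: the corollary is stated without proof immediately after Proposition~\ref{prop:heisenberg} (whose last sentence asserts $\Mob(\mathbb{H}^n,\Theta)=\mathrm{Aut}(\mathbb{H}^n)$) and the remark that ``the solutions thus obtained can be transferred to the CR sphere via the Cayley transform,'' so it is meant to follow by applying the chain rule~\eqref{e:chainrule} once $\mathcal{S}_{\theta_0}(\Psi)=0$ is known. Your handling of the Cayley-transform bookkeeping and the $n=1$ CR-pluriharmonicity check is more explicit than the paper, but the approach is the same.
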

If $\varphi$ is of the form \eqref{e:heisenbergsol} and $\theta = e^{2\varphi}\Theta$, then $\theta$ is Einstein. Moreover, we show in Theorem~\ref{thm:cstcur} below that $\theta$ has Webster curvature tensor of the form
\begin{equation}\label{e:cstcur}
R_{\beta\cbar\alpha\bar{\sigma}} 
= 
\eta(\delta_{\alpha\gamma} \delta_{\beta\sigma}+\delta_{\alpha\sigma}  \delta_{\beta\gamma}),
\end{equation}
where 
\begin{equation}
\eta = -\frac{R}{n(n+1)}.
\end{equation}
Here $R$ is the Webster scalar curvature, which can be computed using
\begin{equation}
R 
= -2e^{-2\varphi}(n+1)\left( \Delta_{\flat}\varphi +2n|\bar{\partial}_{\flat} \varphi|^2 \right).
\end{equation}
By direct calculation, one get
\begin{equation}
\varphi_{;\alpha\bbar} = -i\kappa G^{-1} \delta_{\alpha\beta}.
\end{equation}
Therefore,
\begin{equation}
R = n(n+1)(4\Im(\bar{\kappa} \lambda) - |\mu|^2).
\end{equation}
Observe that if $R>0$, then $4\Im(\bar{\kappa}\lambda) - |\mu|^2>0$ and $\varphi$ is defined on all $\mathbb{H}^n$; this is the case considered in \cite{JLCRYamabe}.

Our main finding in this section is the following theorem.
\begin{theorem}\label{thm:cstcur}
	Let $(M,\theta)$ have dimension at least 5 and $p\in M$. Then the CR M\"{o}bius equation $B_{\theta}(\varphi) = 0$ is fully integrable at $p$ if and only if $(M,\theta)$ is locally equivalent to a pseudo-hermitian space form of vanishing pseudo-hermitian torsion near~$p$.
\end{theorem}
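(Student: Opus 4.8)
Throughout I would work with the equivalent \emph{linear} system~\eqref{e:main} rather than with $B_{\theta}(\varphi)=0$ directly, using the reduction recorded in the remark following Definition~\ref{def:fi}: near a point, a solution $\varphi$ of $B_{\theta}(\varphi)=0$ with prescribed $\partial_{\flat}\varphi|_{p}$ corresponds (choosing the CR primitive $G$ with $u=\Re G$ non-vanishing at $p$) to a solution $u$ of~\eqref{e:main} with prescribed $\partial_{\flat}u|_{p}$, so that ``fully integrable at $p$'' means precisely that \eqref{e:main} admits local solutions realizing every value of $\partial_{\flat}u|_{p}\in(T^{1,0})^{\ast}_{p}M$. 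In a frame, \eqref{e:main} reads $u_{;\alpha\beta}=0$ and $u_{;\alpha\bbar}+u_{;\bbar\alpha}=\lambda_{u}h_{\alpha\bbar}$, which with the commutation relation $u_{;\alpha\bbar}-u_{;\bbar\alpha}=iu_{;0}h_{\alpha\bbar}$ gives $u_{;\alpha\bbar}=\phi\,h_{\alpha\bbar}$ with $\phi=\tfrac12(\lambda_{u}+iu_{;0})$. The ``if'' direction is the easy one: substituting \eqref{e:cstcur} into Webster's formula for $S_{\beta}{}^{\alpha}{}_{\gamma\bar{\sigma}}$ shows that a pseudo-hermitian space form of vanishing torsion has vanishing Chern--Moser tensor, hence is CR spherical (as $n\ge2$), hence locally CR equivalent to $(\mathbb{H}^{n},\Theta)$; under such an equivalence $\theta$ is carried to some $e^{2\tilde\varphi}\Theta$, which is again Einstein, so Proposition~\ref{thm:mobric} gives $B_{\Theta}(\tilde\varphi)=0$, and then additivity~\eqref{e:additive} yields $B_{e^{2\tilde\varphi}\Theta}(\varphi)=B_{\Theta}(\tilde\varphi+\varphi)$, so the full integrability of $B_{\Theta}(\cdot)=0$ on $\mathbb{H}^{n}$ (Proposition~\ref{prop:heisenberg}) transfers to $B_{\theta}(\cdot)=0$.

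For the ``only if'' direction I would differentiate these two relations once more and commute, exploiting that $u_{;\alpha\beta}\equiv0$ as a tensor, so all its covariant derivatives vanish identically. Differentiating in the $Z_{\gamma}$ direction and applying~\eqref{e:218} gives $A_{\alpha\gamma}u_{;\beta}=A_{\alpha\beta}u_{;\gamma}$ for every solution $u$; since $\partial_{\flat}u|_{p}$ ranges over $(T^{1,0})^{\ast}_{p}M$, comparing two solutions with linearly independent horizontal gradients forces the symmetric torsion $A_{\alpha\beta}$ to vanish at $p$, hence (full integrability being an open condition) near $p$. Differentiating in the $Z_{\cbar}$ direction and using $u_{;\alpha\beta\cbar}-u_{;\alpha\cbar\beta}=iu_{;\alpha0}h_{\beta\cbar}+R_{\alpha}{}^{\delta}{}_{\beta\cbar}u_{;\delta}$ with $u_{;\alpha\cbar\beta}=\phi_{;\beta}h_{\alpha\cbar}$ gives
\[
R_{\alpha}{}^{\delta}{}_{\beta\cbar}\,u_{;\delta}=-\phi_{;\beta}h_{\alpha\cbar}-iu_{;\alpha0}h_{\beta\cbar}.
\]
The left side is symmetric in $\alpha\leftrightarrow\beta$ (by $R_{\beta\abar\rho\bar{\sigma}}=R_{\rho\abar\beta\bar{\sigma}}$); equating the two symmetrizations of the right side and contracting with $h^{\cbar\alpha}$ forces, using $n\ge2$, the identity $\phi_{;\beta}=iu_{;\beta0}$, whence $R_{\alpha}{}^{\delta}{}_{\beta\cbar}u_{;\delta}=-i(u_{;\alpha0}h_{\beta\cbar}+u_{;\beta0}h_{\alpha\cbar})$. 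Tracing over $\alpha,\cbar$ solves $u_{;\beta0}=\tfrac{i}{n+1}R_{\beta}{}^{\delta}u_{;\delta}$; substituting back and invoking full integrability (the gradient $u_{;\delta}|_{p}$ is arbitrary) gives
\[
R_{\alpha}{}^{\delta}{}_{\beta\cbar}=\tfrac{1}{n+1}\bigl(R_{\alpha}{}^{\delta}h_{\beta\cbar}+R_{\beta}{}^{\delta}h_{\alpha\cbar}\bigr)
\]
near $p$.

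It remains to identify this curvature. Lowering the last display and using the full ``K\"ahler-type'' symmetry of the Webster curvature (symmetry in the two barred indices, which follows from the identities $R_{\beta\abar\rho\bar{\sigma}}=\overline{R_{\alpha\bbar\sigma\bar{\rho}}}=R_{\rho\abar\beta\bar{\sigma}}$), i.e.\ $R_{\alpha\bar{\mu}\beta\cbar}=R_{\alpha\cbar\beta\bar{\mu}}$, one obtains $R_{\alpha\bar{\mu}}h_{\beta\cbar}+R_{\beta\bar{\mu}}h_{\alpha\cbar}=R_{\alpha\cbar}h_{\beta\bar{\mu}}+R_{\beta\cbar}h_{\alpha\bar{\mu}}$; contracting $\bar{\mu}$ against $\beta$ yields $R_{\alpha\cbar}=\tfrac{R}{n}h_{\alpha\cbar}$, so $(M,\theta)$ is pseudo-Einstein near $p$. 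Feeding this back gives a Webster curvature tensor of the space-form type~\eqref{e:cstcur}; together with $A_{\alpha\beta}=0$ this makes $\theta$ Einstein, so $R$ is constant, and $(M,\theta)$ is locally a pseudo-hermitian space form of vanishing torsion, completing the ``only if'' direction. I expect the main obstacle to be exactly this middle stage: setting up the prolongation of~\eqref{e:main} and carrying the commutator and trace computations cleanly (in particular keeping track of which third covariant derivatives vanish identically and which do not), and it is there — in the step $\phi_{;\beta}=iu_{;\beta0}$, and again in the Chern--Moser characterization used in the converse — that the hypothesis $\dim M\ge5$ (i.e.\ $n\ge2$) is essential.
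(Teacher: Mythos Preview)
Your proof is correct, and the ``if'' direction together with the torsion-vanishing step match the paper's argument essentially verbatim.  In the determination of the curvature, however, you take a genuinely different route from the paper.  The paper introduces the auxiliary function $f=|\partialb u|^{2}$, computes $f_{\alpha\bbar}-f_{\bbar\alpha}$, and invokes a separate linear-algebra lemma (Lemma~\ref{lem:la}: if $U\otimes\bar V-V\otimes\bar U=\lambda I$ then $\lambda=0$) to conclude $f_{0}=0$ and $v_{\alpha}=\eta_{u}\,u_{\alpha}$; it then argues that $u_{\alpha}$ is an eigenvector of the Ricci tensor, and full integrability forces a single eigenvalue, hence pseudo-Einstein.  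You bypass all of this by exploiting the Webster curvature symmetries more systematically: the symmetry $R_{\alpha}{}^{\delta}{}_{\beta\cbar}=R_{\beta}{}^{\delta}{}_{\alpha\cbar}$ applied to your identity $R_{\alpha}{}^{\delta}{}_{\beta\cbar}u_{;\delta}=-\phi_{;\beta}h_{\alpha\cbar}-iu_{;\alpha0}h_{\beta\cbar}$ already yields $\phi_{;\beta}=iu_{;\beta0}$ (for $n\ge2$) without any auxiliary lemma; after tracing and applying full integrability you obtain the tensor identity $R_{\alpha}{}^{\delta}{}_{\beta\cbar}=\tfrac{1}{n+1}(R_{\alpha}{}^{\delta}h_{\beta\cbar}+R_{\beta}{}^{\delta}h_{\alpha\cbar})$, and then the symmetry in the \emph{barred} indices $R_{\alpha\bar\mu\beta\cbar}=R_{\alpha\cbar\beta\bar\mu}$ forces $R_{\alpha\cbar}=\tfrac{R}{n}h_{\alpha\cbar}$ directly.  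Your approach is shorter and avoids Lemma~\ref{lem:la} altogether; the paper's approach, on the other hand, isolates the fact $v_{\alpha}\parallel u_{\alpha}$ (equivalently $(|\partialb u|^{2})_{0}=0$), which is of independent interest and is reused in the proof of Lemma~\ref{thm:local1}.
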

The pseudo-hermitian space forms of vanishing torsion were described in \cite{W} as hypersurfaces in $\mathbb{C}^{n+1}$ (see \cite[Section~1.5]{DT}). There are three models:
\begin{align}
Q_0\colon r_0(z,w) & = h_{\alpha\bbar}z^{\alpha}z^{\bbar} + \tfrac{i}{2}(w-\bar{w}) = 0, \\
Q_+(c)\colon r_+(z,w) & = h_{\alpha\bbar}z^{\alpha}z^{\bbar} + |w|^2 - c = 0,\\
Q_-(c)\colon r_-(z,w)  & = h_{\alpha\bbar}z^{\alpha}z^{\bbar}  - |w|^2 + c = 0.
\end{align}
Here the pseudo-hermitian structures $\theta$ is given by $\theta = \iota^{\ast}\frac{i}{2}(\bar{\partial} r - \partial  r)$. Note that
each $M\in \{Q_0, Q_+(c), Q_-(c)\}$ is locally CR spherical. Thus, Theorem~\ref{thm:intro1} follows from Theorem~\ref{thm:cstcur}. In strictly pseudo-convex case, $h_{\alpha\bbar}$ is positive definite; $Q_0$ is equivalent to the Heisenberg manifold described above. Also, if $M$ is torsion-free, then $M$ is a pseudo-hermitian space form if and only if $M$ has constant sectional curvature (see \cite{Barletta,W} for details).

For the proof of Theorem~\ref{thm:cstcur}, we need the following lemma.
\begin{lemma}\label{lem:localtor}
Suppose that $(M,\theta)$ is a pseudo-hermitian manifold of dimension at least~5.
\begin{enumerate}[(i)]
 \item Suppose that $\varphi$ is a solution to $B_{\theta}(\varphi)=0$. If $p$ is a regular point of $\varphi$, then $\rk (A_{\alpha\beta}) \leq 1$ near $p$.
 \item Suppose that  $\varphi_1, \varphi_2$ are two solutions to $B_{\theta}(\varphi)=0$. If $p$ is a regular point of $(\varphi_1,\varphi_2)$, i.e, $\partialb \varphi_1 \wedge \partialb \varphi_2 \ne 0$, then  $A_{\alpha\beta} = 0$ near $p$.
\end{enumerate}
\end{lemma}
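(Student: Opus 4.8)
The plan is to reduce the whole lemma to one algebraic identity obtained by differentiating a CR function. Since $\dim M\geq 5$, i.e.\ $n\geq 2$, any solution $\varphi$ of $B_{\theta}(\varphi)=0$ is CR-pluriharmonic by \cite{L} (as recalled in Section~2), so near $p$ we may write $\varphi=\Re F$ with $F$ a CR function and set $G:=e^{-F}$, a nowhere-vanishing CR function on a neighbourhood of $p$. Exactly as in the Heisenberg computation in the proof of Proposition~\ref{prop:heisenberg}: writing $F=\varphi+i\psi$ with $\psi$ real, the CR condition $F_{;\bbar}=0$ gives the scalar identity $\psi_{;\alpha}=-i\varphi_{;\alpha}$, hence $F_{;\alpha}=2\varphi_{;\alpha}$ and $F_{;\alpha\beta}=2\varphi_{;\alpha\beta}$; a direct computation then yields
\[
 G_{;\alpha}=-2\varphi_{;\alpha}\,G,\qquad G_{;\alpha\beta}=-G\,B_{\theta}(\varphi)_{\alpha\beta}.
\]
In particular $B_{\theta}(\varphi)=0$ forces $G_{;\alpha\beta}\equiv 0$, i.e.\ the $(2,0)$-part of the Tanaka--Webster Hessian of $G$ vanishes identically near~$p$.

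The key step is to differentiate once more and invoke the torsion commutation relation~\eqref{e:218}. Since $G_{;\alpha\beta}\equiv 0$ we have $G_{;\alpha\beta\gamma}=G_{;\alpha\gamma\beta}=0$, so \eqref{e:218} applied to $f=G$ gives $0=iA_{\alpha\gamma}G_{;\beta}-iA_{\alpha\beta}G_{;\gamma}$; substituting $G_{;\beta}=-2\varphi_{;\beta}G$ and dividing by the nowhere-zero factor $-2iG$ produces the pointwise identity
\[
 A_{\alpha\gamma}\,\varphi_{;\beta}=A_{\alpha\beta}\,\varphi_{;\gamma}\qquad\text{for all }\alpha,\beta,\gamma .
\]
Equivalently, for each fixed index $\alpha$ the covector $(A_{\alpha 1},\dots,A_{\alpha n})$ is proportional to $\partial_{\flat}\varphi$.

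For part~(i): at a regular point $p$ of $\varphi$ we have $\partial_{\flat}\varphi|_p\neq 0$, so after a unitary change of the frame $\{Z_\alpha\}$ we may assume $\varphi_{;1}$ is nonzero near $p$; taking $\beta=1$ above gives $A_{\alpha\gamma}=(\varphi_{;\gamma}/\varphi_{;1})\,A_{\alpha 1}$, so every column of the matrix $(A_{\alpha\gamma})$ is a multiple of its first column, whence $\rk(A_{\alpha\beta})\leq 1$ near $p$. For part~(ii): applying the displayed identity first to $\varphi_1$ and then to $\varphi_2$ shows that, for each $\alpha$, the covector $(A_{\alpha\cdot})$ is simultaneously proportional to $\partial_{\flat}\varphi_1$ and to $\partial_{\flat}\varphi_2$; since $\partialb\varphi_1\wedge\partialb\varphi_2\neq 0$ at $p$, hence near $p$, these two $(1,0)$-forms are linearly independent there, which forces $(A_{\alpha\cdot})\equiv 0$, i.e.\ $A_{\alpha\beta}=0$ near~$p$.

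The computations involved are routine; the only point requiring care is the verification of $G_{;\alpha\beta}=-G\,B_{\theta}(\varphi)_{\alpha\beta}$ with the Tanaka--Webster conventions (in particular the passage from $\psi_{;\alpha}=-i\varphi_{;\alpha}$ to $\psi_{;\alpha\beta}=-i\varphi_{;\alpha\beta}$, and the fact that $G$ can be chosen CR and nowhere vanishing on a neighbourhood of $p$). I expect no substantive obstacle beyond this bookkeeping: the content of the lemma is the single observation that feeding the CR function $G$ --- whose holomorphic Hessian is already annihilated by the M\"obius equation --- into the third-order torsion commutator~\eqref{e:218} produces a relation tying the torsion $A_{\alpha\beta}$ directly to $\partial_{\flat}\varphi$.
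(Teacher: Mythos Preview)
Your proof is correct and arrives at exactly the same key identity $A_{\alpha\beta}\varphi_{\gamma}=A_{\alpha\gamma}\varphi_{\beta}$ via the same commutation relation~\eqref{e:218} as the paper. The only difference is in execution: the paper feeds $\varphi$ itself into~\eqref{e:218}, noting directly from $\varphi_{;\alpha\beta}=2\varphi_{\alpha}\varphi_{\beta}$ that $\varphi_{;\alpha\beta\gamma}=8\varphi_{\alpha}\varphi_{\beta}\varphi_{\gamma}$ is totally symmetric, and thereby avoids the detour through CR-pluriharmonicity and the auxiliary CR function~$G$; your linearization via $G$ is the trick used elsewhere in the paper (Proposition~\ref{prop:heisenberg}, equation~\eqref{e:main}), but here it is not needed and the direct route is shorter.
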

\begin{proof} We use similar idea as in \cite{LW, LSW}. We remark that the paper \cite{IV} also contains various formulas (e.g., in the proof of Lemma~4.1 in that paper) leading to a proof of this lemma.

Suppose that $B_{\theta}(\varphi) = 0$ then,
\begin{equation} 
\varphi_{;\alpha\beta}  = 2\varphi_{\alpha}  \varphi_{\beta}.
\end{equation} 
Taking derivative, we obtain
\begin{equation}
 \varphi_{;\alpha\beta\gamma} = 2\varphi_{;\alpha\gamma}\varphi_{\beta} + 2\varphi_{\alpha} \varphi_{;\beta\gamma} = 8\varphi_{\alpha} \varphi_{\beta}\varphi_{\gamma}.
\end{equation}
Therefore, $\varphi_{;\alpha\beta\gamma} = \varphi_{;\alpha\gamma\beta}$. Using commutation relation~\eqref{e:218}, we deduce that
\begin{equation}
 A_{\alpha\beta}\varphi_{\gamma}  = A_{\alpha\gamma}\varphi_{\beta}
\end{equation}
This implies that whenever $\partialb \varphi \ne 0$,
\begin{equation}
 A_{\alpha\beta} = \zeta\, \varphi_{\alpha}  \varphi_{\beta},
\end{equation}
where the function $\zeta$ is given by
\begin{equation}
\zeta = |\bar{\partial}_{\flat} \varphi|^{-4} A_{\gamma\sigma}\varphi_{\bar{\gamma}} \varphi_{\bar{\sigma}}.
\end{equation}
Therefore, the matrix $(A_{\alpha\beta})$ is a scalar multiple of the rank-1 matrix $(\varphi_{\alpha} ) \otimes (\varphi_{\beta})$. This completes the proof of (i). The proof of (ii) also follows, as under the assumption of (ii), $(A_{\alpha\beta})$ must be a scalar multiple of two linearly independent (in the space of matrices) rank-1 matrices.
\end{proof}
 \begin{proof}[Proof of Theorem~\ref{thm:cstcur}]
We first prove the necessity condition. Suppose that $B_{\theta}(\varphi)=0$ is fully integrable at $p$ and hence on a neighborhood of $p$.
Applying Lemma~\ref{lem:localtor}~(ii), we see immediately that  the torsion $A_{\alpha\beta}$ must vanish near $p$.

Suppose that $\varphi$ is a nonconstant solution to $B_{\theta}(\varphi) =0$. Let $u=e^{-\varphi} \cos (\psi)$, where $\psi$ is a function such that $\varphi+i\psi$ is CR.  Then $u$ is CR-pluriharmonic and satisfies $u_{\alpha\beta}=0$. There is a function $v$ defined near $p$ such that
\begin{equation}
u_{\alpha\bbar}  = v\delta_{\alpha\beta} .
\end{equation}
Moreover, since $A_{\alpha}{}^{\bbar} =0$,
\begin{equation}
0 = P_{\alpha} u = u_{\bbar}{}^{\bbar}{}_{;\alpha} = n \bar{v}_{\alpha}.
\end{equation}
Thus $v$ is CR. 
Therefore,
\begin{equation} 
iu_{0;\alpha} \delta_{\beta\gamma} = (u_{\gamma\bbar} - u_{\bbar\gamma})_{;\alpha} 
=
\delta_{\beta\gamma} (v - \bar v) _{;\alpha} 
=
\delta_{\beta\gamma} v_{\alpha} 
\end{equation} 
Consequently, since $A_{\alpha}{}^{\bbar} =0$,
\begin{equation} 
v_{\alpha}  = iu_{0;\alpha} = i u_{\alpha;0} +i A_{\alpha} {}^{\bbar} u_{\bbar}
= i u_{\alpha;0}.
\end{equation} 
On the other hand, from \(u_{\alpha\beta}  =0\), we have
\begin{align}\label{e:wecur}
 0 = u_{\alpha\beta;\cbar }
  & = u_{\alpha\cbar ;\beta} + iu_{\alpha;0} \delta_{\beta\gamma} + R_{\beta\cbar \alpha\bar{\sigma}} u_{\sigma} \notag \\
  & = v_{\beta}  \delta_{\alpha\gamma} +v_{\alpha}  \delta_{\beta\gamma} +R_{\beta\cbar \alpha\bar{\sigma}} u_{\sigma}.
\end{align}
Taking the trace over $\beta$ and $\gamma$, we obtain
\begin{equation}\label{e:ricci1}
0 = (n+1)v_{\alpha}  + R_{\alpha\bar{\sigma}} u_{\sigma}.
\end{equation}

Let $f = |\bar{\partial}_{\flat} u|^2$, then
\begin{equation}
f_{\alpha} = (u_{\beta}u_{\bbar})_{;\alpha} = u_{\beta} u_{\bbar;\alpha} = \bar{v} u_{\alpha}.
\end{equation}
Therefore,
\begin{equation}\label{e:b41}
f_{\alpha\bbar} 
= (\bar{v}u_{\alpha})_{;\bbar}
= \bar{v}_{\bbar} u_{\alpha} +|v|^2 \delta_{\alpha\beta}.
\end{equation}
Taking conjugate and changing the role of $\alpha$ and $\beta$, we obtain,
\begin{equation}\label{e:b42}
f_{\bbar\alpha} = v_{\alpha}  u_{\bbar} + |v|^2 \delta_{\alpha\beta} .
\end{equation}
From \eqref{e:b41} and \eqref{e:b42}, we obtain
\begin{align}\label{e:a11}
if_0 \delta_{\alpha\beta} =
 f_{\alpha\bbar} - f_{\bbar \alpha}
  = \bar{v}_{\bbar} u_{\alpha}  - v_{\alpha}  u_{\bbar}.
\end{align}
We need the lemma below, whose proof is deferred to the end of this section.
\begin{lemma}\label{lem:la} Suppose that $U$ and $V$ are two vectors in $\mathbb{C}^n$, with $n\geq 2$. If
\begin{equation}\label{e:uotimesv}
U\otimes \bar{V} - V \otimes \bar{U} = \lambda I_{n\times n}
\end{equation}
where $I_{n\times n}$ is the identity matrix, then $\lambda =0$.
\end{lemma}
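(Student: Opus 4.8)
The plan is to recast the hypothesis as a single matrix identity and then separate the trivial high-dimensional case from the one genuinely requiring work. I would regard $U$ and $V$ as column vectors and set $A = (U\ \ V)$, the $n\times 2$ matrix with columns $U$ and $V$. Expanding the product one checks that
\[
 U\otimes \bar V - V\otimes \bar U = A\,J\,A^{*}, \qquad J = \begin{pmatrix} 0 & 1\\ -1 & 0\end{pmatrix},
\]
where $A^{*}$ denotes the conjugate transpose. Thus \eqref{e:uotimesv} becomes $A\,J\,A^{*} = \lambda I_{n\times n}$, and everything reduces to exploiting the structure of the right-hand side.

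For $n\ge 3$ there is nothing to do: $\rk(A\,J\,A^{*})\le \rk(A)\le 2 < n$, while $\rk(\lambda I_{n\times n}) = n$ whenever $\lambda\ne 0$; hence $\lambda = 0$.

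The case $n = 2$ is the real obstacle, since a rank count gives nothing and, as one can verify, a trace computation by itself does not close it either. Here $A$ is square, and I would play two consequences against each other. First, $J^{*} = -J$ gives $(A\,J\,A^{*})^{*} = -A\,J\,A^{*}$, so $\lambda I_{2\times 2}$ is skew-Hermitian; hence $\bar\lambda = -\lambda$ and $\lambda^{2}\le 0$. Second, taking determinants and using $\det J = 1$ together with $\det(A^{*}) = \overline{\det A}$,
\[
 \lambda^{2} = \det(\lambda I_{2\times 2}) = \det(A\,J\,A^{*}) = \det A\cdot\det J\cdot\overline{\det A} = |\det A|^{2}\ge 0.
\]
Comparing, $\lambda^{2} = |\det A|^{2} \le 0$, so $\lambda = 0$ (and incidentally $\det A = 0$, i.e. $U$ and $V$ are linearly dependent).

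The point worth emphasizing is that the factorization $A\,J\,A^{*}$ is what makes the $n=2$ step clean: it simultaneously exhibits the determinant of the left-hand side as the manifestly nonnegative quantity $|\det A|^{2} = |u_{1}v_{2} - u_{2}v_{1}|^{2}$ and the matrix itself as skew-Hermitian, and it is precisely the clash between ``$\ge 0$'' and ``a purely imaginary number squared, hence $\le 0$'' that forces $\lambda$ to vanish. Without this observation one would expand the $2\times 2$ determinant by hand and then still have to supply the skew-Hermitian remark separately.
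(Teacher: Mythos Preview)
Your argument is correct. For $n\ge 3$ you and the paper give the same rank bound: the left-hand side has rank at most $2$, so it cannot equal a nonzero multiple of the identity.

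For $n=2$ the routes diverge. The paper writes out coordinates $U=[a,b]^{T}$, $V=[c,d]^{T}$, reads off from the vanishing off-diagonal entries that $a/c=\overline{b/d}=:k$, and then compares the two diagonal entries $\lambda=|c|^{2}(k-\bar k)$ and $\lambda=|d|^{2}(\bar k-k)$ to force $k\in\mathbb{R}$ and hence $\lambda=0$. Your factorization $U\otimes\bar V - V\otimes\bar U = AJA^{*}$ replaces this by two structural facts: skew-Hermiticity forces $\lambda$ to be purely imaginary (so $\lambda^{2}\le 0$), while the determinant identity $\det(AJA^{*})=|\det A|^{2}$ forces $\lambda^{2}\ge 0$. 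The paper's computation is entirely elementary but somewhat ad hoc (and has to dispose separately of the case where a coordinate vanishes); your version is coordinate-free, handles all cases uniformly, and additionally yields the byproduct that $U$ and $V$ must be linearly dependent. Either way the content is the same, but your packaging via $AJA^{*}$ makes transparent \emph{why} the $2\times 2$ case works: it is exactly the tension between the skew-Hermitian form of the left side and the nonnegativity of its determinant.
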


Applying Lemma~\ref{lem:la}, we find from \eqref{e:a11} that $f_0=0$ and $\bar{v}_{\bbar} u_{\alpha}  = v_{\alpha}  u_{\bbar}$. Hence
\begin{equation}\label{e:uv}
v_{\alpha}  = \eta_u u_{\alpha} 
\end{equation}
where the function $\eta_u$ is given by
\begin{equation}
\eta_u = |\partialb u|^{-2} (\bar{v}_{\bar{\beta}} u_{\beta}).
\end{equation}
Consequently, from \eqref{e:ricci1},
\begin{equation}\label{e:ricci}
R_{\alpha\bar{\sigma}} u_{\sigma} = -(n+1) v_{\alpha}  = -(n+1)\eta_u u_{\alpha} .
\end{equation}
This holds for all solution $\varphi$. Therefore, by the fully integrability assumption,
at each point $p'$ near $p$, the matrix $[R_{\alpha\bbar}|_{p'}]$ must have a single eigenvalue of multiplicity $n$. Therefore,
\begin{equation}\label{e:ricci2}
R_{\alpha\bbar} |_{p'}= -(n+1)\eta(p') \delta_{\alpha\beta}
\end{equation}
near $p$, where $\eta(p') = \eta_u(p')$ that does not depend on the choice of $u$. Consequently, $\theta$ is pseudo-Einstein near $p$.

We shall show that $\eta$ must be a constant. In fact, it is known that a pseudo-Einstein structure with vanishing pseudo-hermitian torsion must have constant scalar curvature (see, e.g. \cite{DT,W}). Indeed, since \(R_{\alpha\bbar} = \frac{R}{n}\delta_{\alpha\beta}\),
we obtain
\begin{equation}
R_{\alpha\bbar;\beta} = \tfrac{1}{n}R_{;\alpha}.
\end{equation}
On the other hand, from the Bianchi identity (see \cite[page 311]{DT} or \cite{L})
\begin{equation}
R_{;\alpha} = R_{\alpha\bbar;\beta} +i(n-1) A_{\alpha\beta,\bbar},
\end{equation}
we deduce, since $A_{\alpha\beta}=0$, that \(R_{;\alpha} = \frac{1}{n}R_{;\alpha} \),
and therefore, $R_{;\alpha} =0$ (as $n\geq 2$). This implies that $R$, being a real-valued CR function, must be a constant.
Therefore, from \eqref{e:ricci2}, we see that $\eta = -\frac{R}{n(n+1)}$ is a real constant that does \emph{not} depend on the choice of~$\varphi$.

On the other hand, from \eqref{e:wecur} and \eqref{e:uv}, we deduce that
\begin{equation}\label{e:443}
R_{\beta\cbar\alpha\bar{\sigma}} u_{\sigma}
=
\eta(u_{\beta}\delta_{\alpha\gamma} + u_{\alpha}\delta_{\beta\gamma}).
\end{equation}
For each fixed $\sigma$, by fully integrability again, we can choose a solution $\varphi$ such that $u_{\rho} = \delta_{\rho\sigma}$ at any given point near~$p$. Plugging this into~\eqref{e:443}, we obtain
\begin{equation}
R_{\beta\cbar\alpha\bar{\sigma}} 
= 
\eta(\delta_{\alpha\gamma} \delta_{\beta\sigma}+\delta_{\alpha\sigma}  \delta_{\beta\gamma}).
\end{equation}
This is precisely~\eqref{e:cstcur} which in particular implies that the Chern-Moser tensor $S_{\beta}{}^{\alpha}{}_{\gamma\bar{\sigma}}$ vanishes near~$p$, i.e., $M$ is CR spherical near~$p$ (by Cartan-Chern-Moser theory \cite{CM}, see also \cite[Section~1.5]{DT}).

We have three cases depending the sign of $\eta$.
\begin{enumerate}[(i)]
\item If $\eta =0$, then $(M,\theta)$  is locally equivalent to Heisenberg manifold $(\mathbb{H}^n, \Theta)$.
\item If $\eta >0$, then $(M,\theta)$ is locally homothetic to the model $Q_-(c)$ above.
\item If $\eta < 0$, then $(M,\theta)$ is locally homothetic to $(\mathbb{S}^n, \theta_0)$.
\end{enumerate}

The proof of sufficiency follows similar idea in \cite{OS}. Suppose that $(M,\theta)$ has constant curvature and vanishing pseudo-hermitian torsion. Then the Chern-Moser tensor of $M$ vanishes; $M$ is CR equivalent to an open set in Heisenberg manifold. Let $F:\mathbb{H}^n \to M$ be a local CR diffeomorphism, $F(0) = p\in M$. Put $\hat{\theta} = F^{\ast} \theta = e^{2\varphi} \Theta$. It suffices to show that the equation $B_{\hat{\theta}}(\sigma) =0$ is fully integrable.  Since $\hat{\theta}$ has constant curvature and vanishing pseudo-hermitian torsion, $B_{\Theta}(\varphi) = 0$. By Lemma~\ref{lem:additive}
\begin{equation}
B_{\Theta}(\varphi+\sigma)
=
B_{\Theta}(\varphi) + B_{\hat{\theta}}(\sigma)
=
B_{\hat{\theta}}(\sigma).
\end{equation}
Then the fully integrability of $B_{\hat{\theta}}$ follows immediately from that of $B_{\Theta}$, which was established in Proposition~\ref{prop:heisenberg}. The proof is complete.
\end{proof}
\begin{proof}[Proof of Lemma~\ref{lem:la}]
Observe that the matrix appearing on the left-hand side of the equation~\eqref{e:uotimesv} is of rank at most two, as it is the sum of two rank-1 matrices. Therefore, if $n\geq 3$ then both side of~\eqref{e:uotimesv} must be zero; in particular, $\lambda=0$, as desired. If $n=2$, we write 
$U=[a,b]^T$ and $V=[c,d]^T$ and rewrite the equation  as
\begin{equation}\label{e:m446}
\begin{bmatrix}
a\bar{c}-c\bar{a} & a \bar{d} - c \bar{b} \\
b \bar{c} - d \bar{a} & b \bar{d} - d \bar{b} 
\end{bmatrix}
=
\begin{bmatrix}
\lambda & 0 \\
0 & \lambda
\end{bmatrix}.
\end{equation}
If at least one of $a,b,c$ or $d$ is zero, then clearly $\lambda =0$. Otherwise, from equalities of off diagonal entries in \eqref{e:m446}, one has 
\(
a/c = \overline{b/d} : = k,
\)
for some $k\in \mathbb{C}$, so that $a = kc$ and $b = \bar{k} d$. From the equalities of diagonal entries
\begin{align}
\lambda & = a\bar{c} - c\bar{a} = |c|^2(k-\bar{k}),\\
\lambda  &= b\bar{d} - d\bar{b} = |d|^2(\bar{k} - k).
\end{align}
We deduce that \( (|c|^2 + |d|^2)(k-\bar{k}) =0 \). Therefore, $k-\bar{k} =0$ and  $\lambda =0$.
\end{proof}
\section{M\"{o}bius equation and stability group; Proof of Theorem~\ref{thm:compact1}}
In this section, we prove Theorems~\ref{thm:compact1}. We shall need several lemmas.
\begin{lemma}\label{thm:local1}
 Let $(M,\theta)$ be a pseudo-hermitian manifold of CR dimension $n$. Suppose that $\varphi$ is a nonconstant CR-pluriharmonic
 solution to $B_{\theta}(\varphi)=0$. Let $f=e^{-2\varphi}|\partialb \varphi |^2$ and $ X = H^{\theta}_f - fT$, where $H^{\theta}_f$ is the the \emph{contact Hamiltonian} with potential $f$ and $T$ is the Reeb field.
 Then $X$ is an infinitesimal CR automorphism which vanishes precisely at the critical points of $\varphi$. If in addition, $n\geq 2$, then $X$ is infinitesimal pseudo-hermitian transformation and $Xf=0$. 
\end{lemma}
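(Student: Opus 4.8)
The plan is to work in a local frame near a point $p$ where $\varphi$ is regular, and to reduce everything to the linear function $u = e^{-\varphi}\cos\psi$ (with $\varphi + i\psi$ CR) together with the auxiliary function $f = e^{-2\varphi}|\partialb\varphi|^2$. The first task is to relate $f$ to $u$: a direct computation using $u_\alpha = -e^{-\varphi - i\psi}\varphi_\alpha \cdot(\text{stuff})$ should show that $f$ is, up to a real constant, equal to $|\partialb u|^2 = u_\beta u^{\beta}$, which is the quantity appearing in the proof of Theorem~\ref{thm:cstcur} via equations \eqref{e:b41}--\eqref{e:a11}. I would then recall from that computation that $f_\alpha = \bar v\, u_\alpha$, where $u_{\alpha\bbar} = v\,\delta_{\alpha\beta}$, so that the critical set of $f$ (equivalently of $\varphi$) is cut out by $u_\alpha = 0$, i.e.\ by $\partialb u|_q = 0$. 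This identifies the zero set of the vector field $X = H^\theta_f - fT$ with the critical points of $\varphi$, since the contact Hamiltonian $H^\theta_f$ has horizontal part determined by $\partial_\flat f$ and $\partialb f$, which vanish exactly where $u_\alpha=0$.

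Next I would verify that $X$ is an infinitesimal CR automorphism. The natural route is to show that $f$ satisfies the CR invariance condition for contact Hamiltonians: a contact vector field $H^\theta_f$ is an infinitesimal CR transformation iff its potential $f$ lies in the kernel of a certain second-order operator (the one whose vanishing is $P_\alpha$-type plus a torsion term, cf.\ \eqref{e:palpha} and the discussion of the operator $P_\alpha$). Concretely, since $u$ is CR-pluriharmonic with $u_{\alpha\beta}=0$ and (using vanishing or rank-$\le 1$ torsion from Lemma~\ref{lem:localtor}) $v$ is CR, one computes $f_{\alpha\beta} = (\bar v u_\alpha)_{;\beta} = \bar v\, u_{\alpha\beta} = 0$, so $f$ itself satisfies $f_{\alpha\beta}=0$, i.e.\ $f$ is CR-pluriharmonic; combined with the precise form of $f_{\alpha\bbar}$ from \eqref{e:b41} this is exactly the condition making $X = H^\theta_f - fT$ an infinitesimal CR automorphism (the $-fT$ correction is the standard one turning a contact field into a CR-invariant one). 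For the last clause, when $n\ge 2$ Lemma~\ref{lem:localtor}(i) forces $\rk(A_{\alpha\beta})\le 1$ and, if $\varphi$ is regular, $A_{\alpha\beta} = \zeta\varphi_\alpha\varphi_\beta$; I would then check directly that the pseudo-hermitian variation $\mathcal{L}_X\theta$ vanishes — equivalently that $f$ solves the stronger equation characterizing infinitesimal \emph{pseudo-hermitian} transformations (adding to the CR-pluriharmonicity of $f$ the condition $\Delta_\flat f = 0$ or the appropriate trace condition) — using $f_{\alpha\bbar} = \bar v_\bbar u_\alpha + |v|^2\delta_{\alpha\beta}$ and the relation $\bar v_\bbar u_\alpha = v_\alpha u_\bbar = \eta_u u_\alpha u_\bbar$ established in \eqref{e:uv}. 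Finally $Xf = H^\theta_f f - f\,Tf = \{f,f\}_\theta - f\,Tf$; the Poisson-type bracket $\{f,f\}$ vanishes by antisymmetry, and $Tf = f_{;0}$, which one shows is zero from $f_0 = 0$ — exactly equation \eqref{e:a11}'s consequence $f_0=0$ obtained via Lemma~\ref{lem:la}.

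I expect the main obstacle to be the bookkeeping in the CR-invariance step: getting the exact second-order differential condition on the potential $f$ that is equivalent to $H^\theta_f - fT$ being an infinitesimal CR automorphism (with all torsion terms in the right place) and then matching it against the identities $f_{\alpha\beta} = 0$, $f_{\alpha\bbar} = \bar v_\bbar u_\alpha + |v|^2\delta_{\alpha\beta}$ that we have available. A secondary subtlety is the case $n=1$ versus $n\ge 2$: for $n=1$ the pseudo-hermitian (rather than merely CR) conclusion and $Xf=0$ may fail, and the hypotheses ($\varphi$ CR-pluriharmonic, which for $n=1$ is an extra assumption not automatic from $B_\theta(\varphi)=0$) are exactly what is needed to run the $n\ge 2$ argument's key steps; I would be careful to invoke CR-pluriharmonicity of $\varphi$ (hence $u_{\alpha\beta}=0$) rather than $B_\theta(\varphi)=0$ alone wherever the computation needs it.
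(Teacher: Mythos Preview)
There is a genuine gap in the step where you verify that $X$ is an infinitesimal CR automorphism. The correct condition on the potential is not $f_{\alpha\beta}=0$ but rather
\[
f_{;\alpha\beta} + i f A_{\alpha\beta} = 0,
\]
as stated in the paper just before the proof (this is the standard criterion from \cite{L96}). Your computation of $f_{\alpha\beta}$ is also incorrect: from $f_\alpha = \bar v\, u_\alpha$ one gets $f_{;\alpha\beta} = \bar v_{\beta}\, u_\alpha + \bar v\, u_{\alpha\beta} = \bar v_{\beta}\, u_\alpha$, and the term $\bar v_{\beta}\, u_\alpha$ does not vanish in general. Your justification ``$v$ is CR by Lemma~\ref{lem:localtor}'' fails on two counts: that lemma requires $n\ge 2$ (whereas the CR-automorphism assertion is claimed for all $n$), and even when it applies it only gives $\rk(A_{\alpha\beta})\le 1$, not $A_{\alpha\beta}=0$; the conclusion ``$v$ is CR'' in the proof of Theorem~\ref{thm:cstcur} was drawn only \emph{after} torsion had been shown to vanish there.

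The paper's argument instead uses the CR-pluriharmonicity of $u$ via $P_\alpha u = 0$ to obtain $\bar v_{\alpha} = -i A_{\alpha\beta} u^{\beta}$, and uses the identity $A_{\alpha\beta} u_\gamma = A_{\alpha\gamma} u_\beta$ (a consequence of $u_{\alpha\beta}=0$ and the commutation relation \eqref{e:218}) to rewrite $\bar v_\beta u_\alpha = -i A_{\beta\gamma} u^{\gamma} u_\alpha = -i A_{\alpha\beta}\, u^{\gamma} u_{\gamma} = -i A_{\alpha\beta} f$. This yields exactly $f_{;\alpha\beta} + i f A_{\alpha\beta}=0$, valid for every $n\ge 1$ and without any hypothesis on the torsion. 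Your treatment of the $n\ge 2$ clause is closer to correct: the pseudo-hermitian condition is $L_X\theta = -(Tf)\theta = 0$, i.e.\ $f_0=0$ (not a trace/Laplacian condition), and this does follow from \eqref{e:a1} together with Lemma~\ref{lem:la}; invoking \eqref{e:uv} is unnecessary and again imports a vanishing-torsion hypothesis you do not have. Finally, for the zero set of $X$: note $X_p=0$ is equivalent to $f(p)=0$ (because of the $-fT$ component), and $f = |\partialb u|^2 = |G|^2|\partialb\varphi|^2$ vanishes exactly where $\partialb\varphi$ does.
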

Here by \emph{contact Hamiltonian} with potential $f$, we mean the unique vector field  $H^{\theta}_f$ satisfying
\begin{equation}
H^{\theta}_f \lrcorner\, \theta =0 \quad \text{and}\quad H^{\theta}_f \lrcorner\, d\theta = df - (Tf)\theta.
\end{equation}
In terms of local frame,
\begin{equation} 
H^{\theta}_{f} = if^{;\abar} Z_{\abar} - i f^{;\alpha} Z_{\alpha}  .
\end{equation} 
It is well-known that
$X: = H^{\theta}_f - fT$ is an infinitesimal contact transformation; in fact by Gray's theorem, any such transformation arises in this way. Moreover, $X$ is an infinitesimal CR automorphism if and only if the potential function $f$ satisfies
\begin{equation} 
f_{;\alpha\beta} + ifA_{\alpha\beta} =0,
\end{equation} 
where $A_{\alpha\beta}$ is the torsion of~$\theta$ (see \cite{L96}).
\begin{proof}[Proof of Lemma~\ref{thm:local1}]
As $\varphi$ is CR-pluriharmonic, we take $\psi$ such
that $G:=e^{-\varphi - i\psi}$. Let $u = e^{-\varphi}\cos(\psi) = \Re(G)$. Then, $u$ is CR-pluriharmonic and satisfies
\begin{equation} 
u_{\alpha\beta}=0, \quad u_{\alpha\bbar}  = v\delta_{\alpha\beta} , \quad P_{\alpha} u =0.
\end{equation} 
We deduce that
\begin{equation}
 0 = P_{\alpha}  u = u_{\cbar}{}^{\cbar } {}_{\alpha}  +inA_{\alpha\beta} u^{\beta} .
\end{equation}
Consequently,
\begin{equation}
 \bar{v}_{\alpha}  = \tfrac{1}{n} u_{\bar{\gamma}}{}^{\bar{\gamma}}{}_{\alpha} = -iA_{\alpha\beta}u^{\beta} .
\end{equation}
By direct calculation,
\begin{equation}
f = e^{-2\varphi} | \partialb \varphi |^2 = |\partialb u|^2,
\end{equation}
and thus
\begin{equation} \label{e:58}
 f_{\alpha}  = (u_{\beta}u_{\bbar   })_{;\alpha}  = u_{\beta}u_{\bbar;\alpha} =  \bar{v}u_{\alpha} .
\end{equation}
On the other hand, from $u_{\alpha\beta}  = 0$, we see that
\begin{equation} 
A_{\alpha\beta}u_{\gamma}  = A_{\alpha\gamma}u_{\beta} .
\end{equation} 
Therefore,
\begin{align}
 f_{;\alpha\beta} = (\bar{v}u_{\alpha} )_{;\beta}
  = \bar{v}_{\beta}  u_{\alpha} 
  = -i A_{\beta\gamma}u_{\cbar } u_{\alpha} 
  = -i A_{\alpha\beta}u_{\cbar } u_{\gamma} 
  = -i A_{\alpha\beta}f,
\end{align}
Hence,
\begin{equation}
 f_{;\alpha\beta} + i A_{\alpha\beta}f = 0.
\end{equation}
Therefore, $X$ is an infinitesimal CR automorphism.

Suppose that $n\geq 2$. We shall prove that $Xf=0$. The argument is similar to that in the proof of Theorem~\ref{thm:cstcur}. Observe that that from \eqref{e:58}
\begin{align}\label{e:b1}
f_{\alpha\bbar} 
=
(\bar{v} u_{\alpha})_{;\bbar}
=
\bar{v}_{\bbar} u_{\alpha} + \bar{v} u_{;\alpha\bbar}
=
\bar{v}_{\bbar} u_{\alpha}  + |v|^2 \delta_{\alpha\beta}.
\end{align}
Taking conjugate and changing the role of $\alpha$ and $\beta$, we obtain,
\begin{equation}\label{e:b2}
f_{\bbar\alpha} = v_{\alpha}  u_{\bbar} + |v|^2 \delta_{\alpha\beta} .
\end{equation}
From \eqref{e:b1} and \eqref{e:b2}, we obtain
\begin{align}\label{e:a1}
if_0 \delta_{\alpha\beta}  =
 f_{\alpha\bbar} - f_{\bbar \alpha}
 = \bar{v}_{\bbar} u_{\alpha}  - v_{\alpha}  u_{\bbar}.
\end{align}
Applying the Lemma~\ref{lem:la}, we have $f_0=0$ and thus
\begin{align}
X f 
= (H^f_{\theta} - fT) f
= fTf
= 0,
\end{align}
as desired. Next, from $f_0 =0$, we obtain
\begin{equation}
L_{X} \theta = X \lrcorner d\theta + d(X \lrcorner \theta) = -(Tf) \theta = 0,
\end{equation}
where $L_X$ is the Lie derivative. Therefore, $X$ generates a local flow of pseudo-hermitian isometries, as desired.

Finally, observe that $X_p = 0$ if and only if $f(p)=0$ if and only if $p$ is a critical point of $\varphi$. The proof is complete.
\end{proof}

\noindent
\textbf{Remark.} (i) In a recent paper \cite{IV2}, the authors consider the vector field defined (in our notations) by
\(
Q:= H_{\sigma}^{\theta} - \sigma T, \ \sigma = \frac{1}{2}d(e^{-2\varphi})(T).
\)
It is proved in Section 5.2 of \cite{IV2} that $Q$ is also an infinitesimal CR automorphism, \emph{provided} that the torsion vanishes (i.e., $A_{\alpha\beta}=0$). We point out here that $Q$ and $X$ have different geometric behaviors: on compact manifolds, $\langle \theta, Q\rangle = \sigma$ changes sign (unless $Q$ vanishes identically), but $\langle \theta, X\rangle = e^{-2\varphi}|\partial_{\flat} \varphi|^2 \geq 0$ does not. The last fact implies in particular that the angle between $X$ and $T$ (measured using Webster metric) is less than $\pi/2$ at most points. We thank an anonymous referee for drawing our attention to this interesting paper.

(ii) One can show, using this lemma, that near a regular point of $\varphi$ the Webster metric $g_{\theta}$ is a warped product metric, and hence, this lemma is similar to Brinkmann's in \cite{BR}. We leave the detail to the reader.
\begin{proof} [Proof of Theorem~\ref{thm:compact1}] Suppose, for contradiction, that $B_{\theta}(\varphi) =0$ had a nonconstant CR-pluriharmonic solution $\varphi$. Then $X$ (as defined in Lemma~\ref{thm:local1}) is an infinitesimal CR automorphism which vanishes precisely at the critical points of~$\varphi$. Since $M$ is compact and $\varphi$ is real-valued, the critical points of $\varphi$ must be nowhere dense and nonempty. If $p$ is such a critical point, then $\aut(M,p)$ has positive dimension. This is a contradiction.
\end{proof}
\section{Manifolds with nonpositive Webster's Ricci curvature and Proof of Theorem~\ref{thm:compact2}}
An essential ingredient for our proof of Theorem~\ref{thm:compact2} is the following Bochner-type identity discovered in \cite{CCY}. We remark that one can also use Greenleaf's Bochner formula (see \cite{IV,LW}) in the proof.
\begin{proposition}[Chanillo et al. \cite{CCY}]
 Let $f$ be a smooth function on $M$, then
 \begin{multline}\label{e:bochner}
  -\Box_\flat |\partialb f|^2 
  = \sum_{\alpha,\beta}\left(|f_{\abar\bbar}|^2 + |f_{\abar \beta}|^2 \right)
   - \tfrac{n+1}{n} \left(\Box_{\flat} f\right)_{\abar} \bar{f}_{\alpha}  - \tfrac{1}{n} f_{\abar} \left(\overline{\Box_{\flat} f}\right)_{\alpha} \\
   + R_{\alpha\bbar}  f_{\abar}f_{\beta} - \tfrac{1}{n} \bar{f}_{\alpha}  \overline{P_{\alpha} \bar{f}} + \tfrac{n-1}{n}f_{\abar} (P_{\alpha}  \bar{f}).
 \end{multline}
\end{proposition}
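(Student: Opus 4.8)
The plan is to prove \eqref{e:bochner} by a direct covariant computation --- this is the standard CR Bochner calculation (compare Greenleaf's formula and its use in \cite{IV,LW}), and since the identity is due to Chanillo et al., one may alternatively just cite \cite{CCY}. Fix a local unitary admissible coframe, so that $h_{\alpha\bbar}=\delta_{\alpha\beta}$ and $|\partialb f|^2=\sum_\alpha f_{\abar}\,\bar f_{\alpha}$; then apply the second-order operator $\Box_\flat$ term by term and reorganize the output using the commutation relations of Section~2 together with the Graham--Lee operator $P_\alpha$ from \eqref{e:palpha}.

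Concretely I would proceed in three steps. \emph{First}, take a single covariant derivative, $\nabla_\gamma|\partialb f|^2=\sum_\alpha\big(f_{;\gamma\abar}\,\bar f_\alpha+f_{\abar}\,\bar f_{;\gamma\alpha}\big)$, and similarly for $\nabla_{\bar\gamma}$. \emph{Second}, differentiate again and trace over $\gamma$ so as to assemble $\Box_\flat$ (which symmetrizes the $\gamma\bar\gamma$ and $\bar\gamma\gamma$ orderings). Out of the four resulting families of terms, the two families consisting of a second derivative of $f$ paired with a second derivative of $\bar f$ contribute exactly the Hessian-norm block $\sum_{\alpha,\beta}\big(|f_{\abar\bbar}|^2+|f_{\abar\beta}|^2\big)$ and are already in final form; the remaining terms are third covariant derivatives of $f$ (or of $\bar f$) contracted against one first derivative, of the shapes $f_{;\gamma\abar\bar\gamma}\,\bar f_\alpha$, $f_{;\bar\gamma\abar\gamma}\,\bar f_\alpha$ and their $\bar f$/conjugate variants. \emph{Third}, rewrite those third-derivative terms: using $f_{;\alpha\bbar}-f_{;\bbar\alpha}=if_{;0}h_{\alpha\bbar}$, $f_{;\alpha\beta\cbar}-f_{;\alpha\cbar\beta}=if_{;\alpha0}h_{\beta\cbar}+R_\alpha{}^\delta{}_{\beta\cbar}f_\delta$, $f_{;0\alpha}-f_{;\alpha0}=A_{\alpha\beta}f^\beta$ and \eqref{e:218}, I would move the contracted index until each trace becomes one of four things: a derivative of $\Box_\flat f$ (yielding the terms $-\tfrac{n+1}{n}(\Box_\flat f)_{\abar}\bar f_\alpha-\tfrac1n f_{\abar}(\overline{\Box_\flat f})_\alpha$, the unequal coefficients reflecting that $|\partialb f|^2$ is built from $\partialb f$ alone so that $\Box_\flat f$ and $\overline{\Box_\flat f}$ enter asymmetrically); a Webster Ricci contraction $R_{\alpha\bbar}f_{\abar}f_\beta$ produced by the curvature term above; a Reeb-derivative term $f_{;\alpha0}$; or a term recognizable through $P_\alpha\bar f$ by \eqref{e:palpha}. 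The torsion contributions coming from \eqref{e:218} and the leftover Reeb-derivative terms then combine, after the appropriate conjugation and relabeling, precisely into the two $P_\alpha\bar f$-terms on the right-hand side of \eqref{e:bochner}.

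The main obstacle is not conceptual but organizational: there are many terms, one must keep scrupulous track of the sign and normalization conventions for $\Box_\flat$ and for $|\partialb f|^2$, and --- the crucial point --- one must verify that the torsion terms and the derivatives along the Reeb field do not survive on their own but regroup exactly into $P_\alpha\bar f$ with the coefficients $-\tfrac1n$ and $\tfrac{n-1}{n}$. It is precisely this cancellation that the Graham--Lee operator is designed to package; once it is in place, \eqref{e:bochner} falls out by collecting terms. The shortcut, and the route actually taken here, is to invoke \cite{CCY}.
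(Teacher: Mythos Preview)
Your proposal is correct and matches the paper's approach: the paper does not prove this proposition either, but simply attributes it to Chanillo et al.\ \cite{CCY} and cites it as an ingredient. Your sketch of the direct covariant computation is a reasonable outline of how one would reproduce the identity, but as you note, invoking \cite{CCY} is exactly what is done here.
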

\noindent
Here, $\Box_{\flat}$ is the Kohn-Laplacian acting on functions by \(
 \Box_{\flat} f = - f_{\cbar }{}^{\cbar },
\)
and $P_{\alpha} $ is the third order operator defined in \eqref{e:palpha}.
\begin{proof}[Proof of Theorem~\ref{thm:compact2}]
Suppose that $\varphi$ satisfies $B_{\theta}(\varphi)=0$ on $M$. We need to show
that $\varphi$ is a constant. Locally, we can find a real-valued function $\psi$ such that $G: = e^{-\varphi - i\psi}$ is CR, i.e., $G_{\abar}=0$. Let $u = \Re (G) = e^{-\varphi}\cos(\psi)$ as before. We compute,
\begin{equation}\label{e:ua}
   u_{\alpha}  
 =\tfrac{1}{2}(G_{\alpha}  + \overline{G}_{\alpha} )
 = \tfrac{1}{2}G(-\varphi_{\alpha}  - i\psi_{\alpha} ) 
 = -G\varphi_{\alpha} .
\end{equation}
Therefore,  \( u_{\alpha\beta} = -\frac{1}{2}G B_{\theta}(\varphi)_{\alpha\beta}= 0\). We also compute, using $G_{\bbar}=0$,
\begin{equation}\label{e:63}
 u_{\alpha\bbar}  = (-G\varphi_{\alpha} )_{;\bbar} = -G\varphi_{\alpha\bbar}  = -\tfrac{1}{n}G(\overline{\Box_\flat \varphi}) \delta_{\alpha\beta} .
\end{equation}
Thus,
\begin{equation}
\sum_{\alpha,\beta} |u_{\alpha\bbar} |^2 
 = n \left(\tfrac{1}{n}\right)^2|G|^2|\Box_{\flat} \varphi |^2
 = \tfrac{1}{n} e^{-2\varphi}|\Box_{\flat} \varphi |^2.
\end{equation}
Also, taking the trace of \eqref{e:63},
\begin{equation}
 \Box_\flat u = \overline{G}\Box_\flat \varphi.
\end{equation}
Therefore,
\begin{equation}
(\Box_{\flat} u)_{;\bar{a}} 
 = \overline{G}_{\abar} \Box_{\flat} \varphi 
 + \overline{G}(\Box_{\flat}  \varphi )_{;\abar}.
\end{equation}
From this, we easily deduce that
\begin{align}
(\Box_{\flat} u)_{;\abar} u_{\alpha} 
 &= ( \overline{G}_{\abar} \Box_{\flat} \varphi 
   + \overline{G}(\Box_{\flat}  \varphi )_{;\abar})(-G\varphi_{\alpha} ) \notag \\
 &= -(G\overline{G}_{\abar}  \Box_{\flat} \varphi + |G|^2 ( \Box_{\flat} \varphi)_{;\abar})\varphi_{\alpha}  \notag \\
 & = -(|G|^2|  \Box_{\flat} \varphi )_{;\abar} \varphi_{\alpha}  \notag \\
 & = -(e^{-2\varphi}  \Box_{\flat} \varphi )_{;\abar} \varphi_{\alpha} .
\end{align}
On the other hand, from \eqref{e:ua}
\begin{equation}
 | \partialb u|^2 = |G|^2 \, |\partialb \varphi |^2 = e^{-2\varphi} |\partialb \varphi |^2.
\end{equation}
Plugging all into 
the Bochner-type identity~\eqref{e:bochner} (notice also that $u$ is CR-pluriharmonic and thus $P_{\alpha}  u =0$). 
\begin{align}
-\Box_{\flat}(e^{-2\varphi} |\bar{\partial}_{\flat} \varphi|^2)= -\Box_\flat | \partialb u|^2
 = 
 \tfrac{1}{n} e^{-2\varphi}&|\Box_{\flat} \varphi |^2 -
 \tfrac{n+1}{n}\left(e^{-2\varphi} \Box_\flat \varphi \right)_{;\abar}\varphi_{\alpha} \notag  \\
 & -
 \tfrac{1}{n}\varphi_{\abar}\left(e^{-2\varphi} \overline{\Box_\flat \varphi}\right)_{;\alpha}  + e^{-2\varphi} R_{\alpha\bbar} \varphi_{\alpha}  \varphi_{\bbar} .
\end{align}
Taking integral both sides on closed manifold $M$, we obtain
\begin{equation}
 0 = -\frac{n+1}{n}\int_M e^{-2\varphi} | \Box_\flat \varphi | ^2 + \int_M e^{-2\varphi} R_{\alpha\bbar} \varphi_{\alpha}  \varphi_{\bbar}.
\end{equation}
Since the Webster Ricci tensor is supposed to be nonpositive, it must hold that $\Box_\flat \varphi =0$. This implies that $\varphi$ is a real-valued CR function and therefore must be a constant.
\end{proof}

\begin{corollary}
Let $M$ be a compact strictly pseudo-convex CR manifold of dimension at least 5. Suppose that $\theta$ and $\hat{\theta}$ are two pseudo-hermitian structures on $M$ such that $\hat{A}_{\alpha\beta}  = A_{\alpha\beta}$ and $\hat{R}^{\circ}_{\alpha\bbar}  = R^{\circ}_{\alpha\bbar} $. Suppose further that one of the following holds,
\begin{enumerate}[(i)]
 \item For any $p\in M$, the stability group $\aut(M,p)$ is of zero dimension.
 \item Either  $\theta$ or $\hat{\theta}$ has nonpositive Webster Ricci curvature.
\end{enumerate}
Then $\hat{\theta}/\theta$ is a constant.
\end{corollary}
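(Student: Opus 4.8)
The plan is to reduce the corollary directly to Theorem~\ref{thm:compact2} and Theorem~\ref{thm:compact1} via Proposition~\ref{thm:mobric}. First I would invoke Proposition~\ref{thm:mobric}(i): since $\dim M \geq 5$ means $n \geq 2$, the hypotheses $\hat A_{\alpha\beta} = A_{\alpha\beta}$ and $\hat R^\circ_{\alpha\bbar} = R^\circ_{\alpha\bbar}$ are precisely the statement that $\hat\theta$ is a M\"obius metric with respect to $\theta$. Writing $\hat\theta = e^{2\varphi}\theta$ for a globally defined smooth function $\varphi$ on $M$ (which exists and is unique up to nothing, since $\hat\theta/\theta$ is a positive function), this says exactly that $B_\theta(\varphi) = 0$ on all of $M$.

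Next I would observe that, because $n \geq 2$, the vanishing $B_\theta(\varphi) = 0$ forces $\varphi$ to be CR-pluriharmonic: this is the characterization of Graham and Lee recalled in Section~2 (from $B_{\alpha\bbar} = 0$ one gets $\varphi_{\alpha\bbar} - \tfrac1n \varphi_\gamma{}^\gamma h_{\alpha\bbar} = 0$, which by \cite{L} implies CR-pluriharmonicity when $n \geq 2$). Hence $\varphi$ is automatically a CR-pluriharmonic global solution of the M\"obius equation, and both Theorem~\ref{thm:compact1} and Theorem~\ref{thm:compact2} apply to it.

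Then the two cases are immediate. Under hypothesis~(i), $\aut(M,p)$ has zero dimension for every $p$, so Theorem~\ref{thm:compact1} gives that the only CR-pluriharmonic solutions are constants; hence $\varphi$ is constant and $\hat\theta/\theta = e^{2\varphi}$ is a constant. Under hypothesis~(ii), one of $\theta$ or $\hat\theta$ has nonpositive Webster Ricci curvature; by the symmetry of the M\"obius relation (if $\hat\theta$ is M\"obius with respect to $\theta$ then $\theta$ is M\"obius with respect to $\hat\theta$, since $\theta = e^{-2\varphi}\hat\theta$ and $B_{\hat\theta}(-\varphi) = 0$ by Proposition~\ref{lem:additive}), we may assume without loss of generality that it is $\theta$ that has nonpositive Webster Ricci curvature, and then Theorem~\ref{thm:compact2} forces $\varphi$ to be constant. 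In either case $\hat\theta/\theta$ is a constant, as claimed.

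I do not anticipate a genuine obstacle here, since the corollary is essentially a repackaging of the two main theorems through the dictionary of Proposition~\ref{thm:mobric}; the only points requiring a word of care are (a) noting that $n \geq 2$ is what makes the pure torsion-and-traceless-Ricci conditions equivalent to the M\"obius equation and what makes the solution automatically CR-pluriharmonic, and (b) justifying in case~(ii) that one may swap the roles of $\theta$ and $\hat\theta$ using the additivity/reflexivity of the M\"obius relation from Proposition~\ref{lem:additive}, so that ``either $\theta$ or $\hat\theta$'' really does reduce to the single hypothesis of Theorem~\ref{thm:compact2}.
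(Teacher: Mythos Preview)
Your proposal is correct and matches the paper's own (implicit) argument: the corollary is stated without proof precisely because it is meant to follow immediately from Proposition~\ref{thm:mobric}(i) together with Theorems~\ref{thm:compact1} and~\ref{thm:compact2}, exactly as you outline. Your extra care in noting that $n\geq 2$ makes the solution automatically CR-pluriharmonic, and in justifying via Proposition~\ref{lem:additive} that one may swap $\theta$ and $\hat\theta$ in case~(ii), fills in the only details the paper leaves to the reader.
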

Here we remind  that $R^{\circ}_{\alpha\bbar}$ denotes the traceless Webster Ricci tensor. A recent related result \cite{Ho} shows that if $\theta$ and $\hat{\theta}$ are two pseudo-hermitian structures on $M$ such that satisfies $\hat{R}_{\alpha\bbar} = R_{\alpha\bbar}$ on a compact manifold, then $\hat{\theta}/\theta$ is a constant.

In Theorem~\ref{thm:compact2}, the nonpositivity of the Webster Ricci tensor is essential. In fact, we have seen that  in standard CR sphere, the solutions of $B_{\theta}(\varphi) =0$ contains nonconstant solutions \cite{JLCRYamabe,JLextremal}. On the other hand, when $(M,\theta)$ is assumed to be Einstein, then the CR sphere is the only one admitting a nonconstant solutions to $B_{\theta}(\varphi) =0$, as a recent result by Wang \cite{Wang} shows.  For the sake of completeness, we state
\begin{theorem}[\cite{JLextremal},\cite{Wang}]
 Let $(M,\theta)$ be a compact Einstein pseudo-hermitian manifold and $\hat{\theta} = e^{2\varphi}\theta$. Then the following hold.
\begin{enumerate}[(i)]
 \item The pseudo-hermitian structure $\hat{\theta}$ is M\"{o}bius  with respect to $\theta$ if and only if the Webster scalar curvature $\hat{R}$ of $\hat{\theta}$ is a constant.
 \item If $B_{\theta}(\varphi) = 0$ then $\varphi$ is a constant, unless $(M,\theta)$ is the CR sphere in $\mathbb{C}^{n+1}$. In the latter situation, $\varphi$ must be of the following form
\begin{equation}
\varphi(z) = -\log |\cosh t  + (\sinh t) z \cdot \bar{\mu}| + c
\end{equation}
where $t,c\in \mathbb{R}$, and $\mu \in \mathbb{S}^{2n+1}$. Moreover, $(\mathbb{S}^{2n+1},\hat{\theta})$ is locally equivalent to a pseudo-hermitian space form with vanishing torsion.
\end{enumerate}
\end{theorem}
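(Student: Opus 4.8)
The plan is to deduce both assertions from Proposition~\ref{thm:mobric} --- which applies directly because $(M,\theta)$ is Einstein --- together with the analytic results of Jerison--Lee \cite{JLCRYamabe,JLextremal} and Wang \cite{Wang}. For part~(i), the ``only if'' direction is immediate: if $\hat{\theta}$ is M\"obius with respect to $\theta$, then, since $\theta$ is Einstein, Proposition~\ref{thm:mobric} says $\hat{\theta}$ is again Einstein, hence torsion-free and pseudo-Einstein, and such a structure has constant Webster scalar curvature. For the ``if'' direction I would write $\hat{\theta}=u^{2/n}\theta$ with $u=e^{n\varphi}>0$, so that $u$ solves the CR Yamabe equation; since $\theta$ already has constant Webster scalar curvature and $\hat{R}$ is assumed constant, it remains to show that the torsion $\hat{A}_{\alpha\beta}$ and the traceless Webster Ricci $\hat{R}^{\circ}_{\alpha\bbar}$ of $\hat{\theta}$ vanish, equivalently that $B_{\theta}(\varphi)=0$. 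On a compact manifold this is the content of the extremal analysis in \cite{JLextremal} (see also \cite{Wang}): one substitutes $u$ into a Bochner-type identity, for instance \eqref{e:bochner} or Greenleaf's formula, integrates over $M$, and uses the constancy of $\hat{R}$ and the Einstein condition on $\theta$ to annihilate the remaining terms.

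For part~(ii), assume $B_{\theta}(\varphi)=0$; for $n\ge 2$ this makes $\varphi$ CR-pluriharmonic, so $\hat{\theta}=e^{2\varphi}\theta$ is M\"obius with respect to $\theta$ and hence Einstein by Proposition~\ref{thm:mobric}. Thus $\theta$ and $\hat{\theta}$ are two Einstein structures in one conformal class on the compact manifold $M$. If $M$ is not CR equivalent to $(\mathbb{S}^{2n+1},\theta_0)$, I would invoke the rigidity theorem of Wang \cite{Wang} --- which reduces, in the case of nonpositive Webster scalar curvature, to the Jerison--Lee uniqueness theorem \cite{JLCRYamabe} --- to conclude that $\varphi$ is constant. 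If $M$ is CR equivalent to the standard sphere, I would use the complete classification of the CR Yamabe extremals in \cite{JLextremal}; equivalently, one transports the Heisenberg solutions of Proposition~\ref{prop:heisenberg} to $\mathbb{S}^{2n+1}$ through the Cayley transform, which produces exactly the functions $\varphi(z)=-\log|\cosh t+(\sinh t)\,z\cdot\bar{\mu}|+c$ with $t,c\in\mathbb{R}$ and $\mu\in\mathbb{S}^{2n+1}$. Finally, $\mathbb{S}^{2n+1}$ being CR spherical and $\hat{\theta}$ being torsion-free, pseudo-Einstein and of constant scalar curvature force the Webster curvature of $\hat{\theta}$ into the form \eqref{e:cstcur}, so $(\mathbb{S}^{2n+1},\hat{\theta})$ is locally a pseudo-hermitian space form with vanishing torsion.

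The main obstacle is the ``if'' half of part~(i) together with the classification half of part~(ii): both rest on the genuinely hard PDE input of \cite{JLCRYamabe,JLextremal} --- uniqueness of constant-Webster-scalar-curvature contact forms on a compact manifold and the complete determination of the extremals on the sphere --- which I would cite rather than reprove. Everything else is bookkeeping with Proposition~\ref{thm:mobric} and the conformal transformation formulas for torsion, Ricci, and curvature already recorded in Section~3.
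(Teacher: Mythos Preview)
Your proposal is correct and follows essentially the same route as the paper: the paper's proof is very terse, using Proposition~\ref{thm:mobric} for the ``only if'' direction of (i) (M\"obius $\Rightarrow$ Einstein $\Rightarrow$ constant scalar curvature), citing \cite{JLextremal} and \cite{Wang} for the converse, and citing Wang's theorem \cite{Wang} for part~(ii). Your write-up simply unpacks these citations in more detail, which is fine.
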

\begin{proof} If $\hat{\theta} = e^{2\varphi}\theta$ is a M\"{o}bius structure with respect to $\theta$, then $\hat{\theta}$ is also Einstein, and hence of constant scalar curvature. The converse was proved by Jerison-Lee in \cite{JLextremal} (see also \cite{Wang}). Part (ii) follows
 from Wang's theorem in \cite{Wang}.
\end{proof}
\section{Examples}
Lemma~\ref{thm:local1} implies that the CR-pluriharmonic solutions to the M\"{o}bius equation  on a generic real hypersurface (e.g, a generic ellipsoid) are the constants, since there is no infinitesimal CR automorphism on such a hypersurface. A more interesting example is the following one, which is motivated by our recent work~\cite{ES15} on CR umbilical points.
\begin{example}\rm 
Consider the family of compact, homogeneous three dimensional CR manifolds given by
\begin{equation}
\mu_{\alpha} = \{[z_0 \colon z_1 \colon z_2] \in \mathbb{CP}^2 \mid |z_0|^2+|z_1|^2 + |z_2|^2 = \alpha |z_0^2 + z_1^2 +z_2^2|\}, \quad \alpha >1.
\end{equation}
Observe that $\mu_{\alpha}$ and its covers are the only nonspherical compact strictly pseudo-convex homogeneous manifolds in Cartan's classification \cite{Cartan}. In particular, every point on $\mu_{\alpha}$ is a nonumbilical point (in CR sense) and thus the local stability group $\aut(\mu_{\alpha},p)$ has zero dimension \cite{CM} (see also \cite{ES15}).
Therefore,
for any global pseudo-hermitian structure $\theta$ on $\mu_{\alpha}$ (or on any cover of $\mu_\alpha$), global CR-pluriharmonic solutions to
\(B_{\theta} (\varphi) = 0\)
are the constants. On the other hand,   since $\mu_{\alpha}$ is CR homogeneous, it is locally CR equivalent to a \emph{rigid} hypersurface in $\mathbb{C}^2$; there exists a locally defined pseudo-hermitian structure $\theta$ near every point such that $B_{\theta}(\varphi)$ has nonconstant CR-pluriharmonic solutions (see Example~\ref{ex:2} below.)
\end{example}
\begin{example}\label{ex:2}\rm 
Consider the rigid real hypersurface $M^{2n+1} \subset \mathbb{C}^{n+1}$ defined by 
\begin{equation}
F(Z,\bar{Z}) + \tfrac{i}{2}(w-\bar{w})=0,
\end{equation}
where $F$ has the following form
\begin{equation}
F(Z,\bar{Z}) = \Phi(z,\bar{z}) + |z_2|^2 + \dots + |z_n|^2, \quad \Phi(z,\bar{z}) = |z|^2 + \dots, Z=(z,z_2,\dots, z_n).
\end{equation}
We first choose the pseudo-hermitian structure
\begin{equation}
\eta
= \tfrac{1}{2}\left(ds +i \sum (F_{\abar} dz^{\abar} - F_{\alpha} dz^{\alpha})\right), \quad w = s+it,
\end{equation}
a  holomorphic frame 
\begin{equation}
Z_{\alpha} = \frac{\partial}{\partial z^{\alpha}} - 2i F_{\alpha} \frac{\partial}{\partial w},
\end{equation}
and its dual coframe $\theta^{\alpha} = \iota^{\ast} dz^{\alpha}$. Clearly,
\begin{equation}
d\theta = i F_{\alpha\bbar} dz^{\alpha} \wedge dz^{\bbar} = iF_{\alpha\bbar} \theta^{\alpha} \wedge \theta^{\bbar}.
\end{equation}
Therefore, the Levi metric is given by
$$
h_{1\bar{1}} = \Phi_{z\bar{z}}(z,\bar{z}),\quad  \text{and}\quad
h_{\alpha\bbar} 
= 
F_{\alpha\bbar}
=
\delta_{\alpha\beta} \quad \text{otherwise}.
$$
Observe that all Christoffel symbols vanish except $\Gamma_{11}^1$ and its conjugate:
\begin{equation}
\Gamma_{11}^{1} = \Phi_{z\bar{z}z}/\Phi_{z\bar{z}} = \partial_z \log(\Phi_{z\bar{z}}), \quad \Gamma^{\bar{1}}_{\bar{1}\bar{1}} = \overline{\Gamma^{1}_{11}}.
\end{equation}
Let $\theta = e^{2\sigma} \eta$, where
\[
\sigma = -\tfrac{1}{4}\log \Phi_{z\bar{z}}(z,\bar{z}).
\]
Suppose $\varphi(z)$ is a harmonic function near the origin in $\mathbb{C}_z$. Then $\varphi$ can be lifted to a CR-pluriharmonic function $\varphi^{\uparrow}$ on $M$ via parametrization $(Z,t) \mapsto (Z, t+i F(Z,\bar{Z}))$:
\[
\varphi(Z, t+ i F(Z,\bar{Z})) = \varphi(z).
\]
Direct calculation shows that
\[
B_{\theta}(\varphi^{\uparrow}) = 
2\left( \varphi_{zz} - 2(\varphi_z)^2\right)^{\uparrow} \theta^1 \otimes \theta^1\\
=
\frac{1}{4}S(f)^{\uparrow}  \theta^1 \otimes \theta^1.
\]
Here $S(f)$ is the Schwarzian derivative of a holomorphic mapping $f$ in $\mathbb{C}$ such that
\[
e^{2\varphi} = |f'|.
\]
Therefore,  $B_{\theta} (\varphi^{\uparrow}) =0$ if and only if $f$ is a (classical) M\"{o}bius transformation of complex plane, and hence,
\[
\varphi(z,\bar{z}) = -\log|az+b| + c.
\]
This example shows that the existence of a nonconstant solution to  $B_{\theta}(\varphi) = 0$ does not implies that $(M,\theta)$ has vanishing pseudo-hermitian torsion, neither does it implies $M$ is CR spherical (note, however, that $M\cap \{z=0\}$ is the Heisenberg hypersurface in $\mathbb{C}^{n}$.) 
\end{example}

\end{document}